\newtheorem{lemma}{Lemma}[section]
\newtheorem{them}[lemma]{Theorem}
\newtheorem{rem}{Remark}
\newtheorem{que}{Question}
\newtheorem{exam}{Example}
\begin{document}

\title{{Constructing cospectral graphs via regular rational orthogonal matrix with level two and three}\footnote{This work is supported by the National Natural Science Foundation of China (Grant No. 11971376)}}

\author{\small $^a$Lihuan Mao\footnote{The corresponding author. E-mail address: maolihuan521@163.com} \qquad $^a$Fu Yan \\
\small $^a$ School of Mathematics and Data Science, Shaanxi University of Science and Technology,\\
\small Xi'an Weiyang University Park, Xi'an, P.R. China,
710021}
\date{}
 \maketitle

\abstract

Two graphs $G$ and $H$ are \emph{cospectral} if the adjacency matrices share the same spectrum. Constructing cospectral non-isomorphic graphs has been studied extensively for many years and various constructions are known in the literature, e.g. the famous GM-switching method. In this paper, we shall construct cospectral graphs via regular rational orthogonal matrix $Q$ with level two and three. We provide two straightforward algorithms to characterize with adjacency matrix $A$ of graph $G$ such that $Q^TAQ$ is again a (0,1)-matrix, and introduce two new switching methods to construct families of cospectral graphs which generalized the GM-switching to some extent.

\noindent
{\small\bf MSC classification:~05C50}\\
 {\small\bf Keywords:}~{\small
 Graph spectrum; Cospectral graphs; Rational orthogonal matrix.
 }

\section{Introduction}

All graphs considered in this paper are undirected, finite and simple graphs. For some
notations and terminologies in graph spectra, see~\cite{CDS}.

Let $G=(V,E)$ be a graph with vertex set $V(G)=\{v_1,v_2,\ldots,v_n\}$ and edge set $E(G)=\{e_1,e_2,\ldots,e_m\}$. Let $A(G)$ be the (0,1)-adjacency matrix of $G$, \emph{the characteristic polynomial} of $G$ is defined as $P_G(\lambda)=\det(\lambda I-A(G))$. The spectrum of $G$ consists of all the eigenvalues of $G$, including their respective multiplicities. Two graphs $G$ and $H$ are \emph{cospectral} if they share the same adjacency spectrum.

For a given graph $G$, we say that $G$ is determined by its spectrum ($DS$ for short), if any graph having the same spectrum as $G$ is necessarily isomorphic to $G$ (of course, the spectrum concerned should be specified). ``Which graphs are determined by their spectrum?''  has been a long-standing unsolved, fundamental problem in spectral graph theory and it has attracted much attention of
researchers in recent years. There are two areas of research on this issue. The first area is to prove graphs are determined by their spectrum, e.g., starlike trees \cite{GRO}, lollipop graph \cite{WHH}, $\infty$-graph \cite{JFW}, etc. Actually a lot of results for $DS$-graphs are mostly limited to special graph classes. The other area is to find methods to construct cospectral non-isomorphic graphs, such as Seidel switching~\cite{JHV-JJS}, Schwenk's construction~\cite{AJS}, Sunada's method~\cite{SUN}, and among which, the GM-switching, invented by Godsil and McKay~\cite{CDG-BDM}, is proved to a simple and powerful one. Some of the most recent results about constructions on cospectral graphs see (\cite{AA-WHH},\cite{YZJ-SCG-WW},\cite{LHM-WW},\cite{LHQ-YZJ-WW},\cite{WW-LHQ-YLH}). So many cospectral non-isomorphic examples, to some extent, can reflect the complexity of the $DS$-problem.

We know that the proof of GM-switching is straightforward with a regular rational orthogonal matrix $Q=diag(Q_k,I_{n-k})$ (where $Q_k=\frac{1}{k}J-I_k$). In \cite{AA-WHH}, Abiad and Haemers considered the problem of replacing the above $Q_k$ with another rational orthogonal matrix of level two and small order $k$ ($k=6,7,8$). The \textit{level} of a rational orthogonal matrix $Q$ with $Qe=e$
is the smallest positive integer $\ell$ such that $\ell Q$ is an integral
matrix.  In \cite{WW-LHQ-YLH}, Wang et al.  proposed to use another class of rational orthogonal matrices with level $p$ (an odd prime) for the construction. In \cite{LHM-WW}, Mao et al. focus on the special case that above $Q_k$ is a fully indecomposable regular rational orthogonal matrix with level $\ell = 2$ (see Theorem 2.1 case(i) for details). They also give several problems that have not been completely solved below.

\begin{que}
\rm{\cite{LHM-WW} Given a rational orthogonal matrix $Q$ with level $\ell$, how to characterize graph $G$ with adjacency matrix $A$ such that $Q^TAQ$ is again a $(0,1)$-matrix?}
\end{que}
\begin{que}
\rm{\cite{LHM-WW} For larger $m$, we find a family of generalized sun-graph $S_{2k}$ for which there exists a $Q$ with level two such that
$Q^TA(S_{2k})Q$ is a (0,1)-matrix. What about other graph families? Can something else be said in this direction?}
\end{que}
\begin{que}
\rm{\cite{LHM-WW} Can the method in this paper be applied to deal with the case that $Q$ is a regular rational orthogonal matrix
of level two with several fully indecomposable blocks? More generally, can one say something to Question 1 for those $Q$'s
with level $\ell > 2$?}
\end{que}

 A full answer to these three questions  seems out of reach at present. In this paper, we will give partial answers to them and summarize the main contributions of the paper as follows:

\begin{enumerate}
  \item Let $A$ be the adjacency matrix of a graph $G$. We give two algorithms to construct $A$  by selecting some given matrices combination such that $Q_k^TAQ_k$ is again a (0,1)-matrix, where $Q_k$ is a special case with level $\ell=2$ and $\ell=3$ (see Theorem 2.1). And several families of infinite matrix $A$ are completely characterized (see Example 2, Example 3, Example 4, Example 5).
  \item Based on the above results, we give two new switching methods for constructing cospectral but non-isomorphic graphs with regular rational orthogonal matrix $Q$ of the form  $Q = {\rm diag}(Q_k, I_{n-k}) (k=2m~{\rm or}~3m)$, which generalized the GM-switching to some extent.
\end{enumerate}

The rest of the paper is organized as follows: In Section 2, we give some preliminary
results that are needed in the following sections. In Section 3, we give two algorithms to construct adjacency matrix $A$ such that $Q_k^TAQ_k$ is a (0,1)-matrix, where $Q_k$ has one nontrivial indecomposable block of order $2m$ (resp. $3m$) of level 2 (resp. 3). In Section 4, we give two new methods of constructing cospectral graph. Conclusions are given in Section 5.

\section{Preliminaries}

For convenience of the reader, in this section, we will briefly
review some known results from \cite{WX1,WW} that will be needed in the sequel.

Define
$$\mathcal{Q}(G)=\left\{ \begin{array}{rrr}Q~{\rm is ~a~rational}~~~&\vline&Qe=e,~Q^TA(G)Q ~{\rm is~a~ symmetric}~\\
   ~{\rm orthogonal} ~{\rm matrix}&\vline&   {\rm (0,1)-matrix~ with~ zero~ diagonal}
  \end{array}\right\},$$

  where $e$ is the all-one vector.

 A matrix $M_{n\times n}$ is called \textit{partly
decomposable}, if there exists permutation matrices $P_1$ and $P_2$
such that
$${{P_1MP_2= \left[\begin{array}{cc}
  M_1 & M_2 \\
  O & M_3
\end{array}\right]}},$$
where $M_1$ and $M_3$ are square matrices. If a matrix is not
partly decomposable, it is called \textit{fully indecomposable}
(see \cite{BR}). If an orthogonal matrix $Q$ is partly
decomposable, then it can be converted into the following form by
permuting rows and permuting columns: $${\left[\begin{array}{cc}
  Q_1 & O \\
  O & Q_2
\end{array}\right]},$$

where $Q_1$ and $Q_2$ are orthogonal matrices.

The following theorem gives the basic structures of the fully indecomposable rational orthogonal matrices with level $\ell = 2$ or  $\ell = 3$ and $Qe = e$.
\begin{them}[\cite{WX1}]\label{Q1}
Let $Q$ be a fully indecomposable rational orthogonal matrix
with level $\ell = 2$ or  $\ell = 3$ and $Qe=e$. Then there exist permutation
matrices $P_1$ and $P_2$ such that $P_1QP_2$ can be written as one of the following canonical form:\\
(i)~{\scriptsize{ $\frac{1}{2}\left[\begin{array}{rrrrrr}
Y_1&O_1&\cdots&\cdots&O_1&J_1\\
J_1&Y_1&O_1&\cdots&\cdots&O_1\\
O_1&J_1&Y_1& O_1&\cdots&O_1\\
\vdots& \ddots& \ddots& \ddots& \ddots&\vdots\\
O_1&\cdots&O_1&J_1&Y_1&O_1\\
 O_1&\cdots&\cdots & O_1& J_1& Y_1
\end{array}\right]$ }},
(ii)~{\scriptsize{$\frac{1}{3}\left[\begin{array}{rrrrrr}
Y_2&O_2&\cdots&\cdots&O_2&J_2\\
J_2&Y_2&O_2&\cdots&\cdots&O_2\\
O_2&J_2&Y_2& O_2&\cdots&O_2\\
\vdots& \ddots& \ddots& \ddots& \ddots&\vdots\\
O_2&\cdots&O_2&J_2&Y_2&O_2\\
 O_2&\cdots&\cdots & O_2& J_2& Y_2
\end{array}\right]$}}. \\
\end{them}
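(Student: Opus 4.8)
The plan is to combine an \emph{entry-by-entry} analysis that pins down the local shape of each row and column of $\ell Q$ with a \emph{combinatorial} analysis of the support pattern that forces the global block-cyclic shape, the latter being driven by orthogonality together with full indecomposability. First I would record the two normalizations: since $Q$ is orthogonal and $Qe=e$, we get $Q^{T}e=Q^{T}Qe=e$, so every row and every column of $Q$ sums to $1$, i.e. every line of $\ell Q$ sums to $\ell$. As $\ell Q$ is integral with unit-norm rows and columns, each row of $\ell Q$ is an integer vector of squared length $\ell^{2}$ summing to $\ell$. For $\ell=2$ the only options are a single entry $2$ (a standard basis row) or three $1$'s and one $-1$; the former would split off a $1\times1$ orthogonal block and make $Q$ partly decomposable, contradicting full indecomposability for $n>1$. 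Hence every row, and by the identical argument every column, of $2Q$ has exactly three $+1$'s and one $-1$, so the positions of the $-1$'s form a permutation matrix. For $\ell=3$ the analogous (short) list of integer vectors of squared length $9$ summing to $3$ must be reduced, via orthogonality, to the pattern with five $+1$'s and one $-2$, after which the $-2$'s again form a permutation matrix.

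In both cases this lets me write $Q=\tfrac1\ell S-P$, where $P$ is the permutation matrix recording the most negative entries and $S$ is the $0$-$1$ support matrix, which is then $\ell^{2}$-regular (that is, $4$-regular for $\ell=2$ and $6$-regular for $\ell=3$). Expanding $QQ^{T}=I$ and using $PP^{T}=I$ collapses to the single clean identity $SS^{T}=\ell(SP^{T}+PS^{T})$. Reading off its off-diagonal entries, with $\sigma$ the permutation defined by $P$, gives $o_{ij}=\ell\bigl([S_{i,\sigma(j)}=1]+[S_{j,\sigma(i)}=1]\bigr)$ for the pairwise support overlaps $o_{ij}=|\operatorname{supp}(\text{row }i)\cap\operatorname{supp}(\text{row }j)|$, so $o_{ij}\in\{0,2,4\}$ when $\ell=2$ and $o_{ij}\in\{0,3,6\}$ when $\ell=3$. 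In particular rows split into classes that share a common support, and the $\ell^{2}$-regularity of $S$ forces each such class to have size exactly $\ell$; the same holds for columns. Grouping equal-support rows and equal-support columns turns $S$ into a block matrix whose nonzero blocks are all-ones $\ell\times\ell$ blocks ($J_1$ for $\ell=2$, $J_2$ for $\ell=3$), with exactly two nonzero blocks in each block-row and block-column. Restoring the signs carried by $P$ turns the diagonal block into $Y_1=J_1-2I_2$ (resp. $Y_2=J_2-3I_3$), while each off-diagonal nonzero block stays $J_1$ (resp. $J_2$).

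It remains to arrange the $m$ classes. The block-support pattern is an $m\times m$ $0$-$1$ matrix with exactly two $1$'s per row and per column, hence a disjoint union of directed cycles. If it had more than one cycle, then $S$, and therefore $Q$, would be partly decomposable, contradicting the hypothesis; so it is a single $m$-cycle. Relabelling the classes along this cycle is precisely the choice of the permutations $P_1$ and $P_2$: it places the blocks $Y_1$ (resp. $Y_2$) on the diagonal, the blocks $J_1$ (resp. $J_2$) on the sub-diagonal, and the wrap-around $J$-block in the top-right corner, which is exactly form (i) for $\ell=2$ and form (ii) for $\ell=3$. A final verification that the resulting matrix satisfies $QQ^{T}=I$ (equivalently, that the intra-block sign choices are consistent) closes the argument.

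The main obstacle is the middle step: converting the orthogonality identity $SS^{T}=\ell(SP^{T}+PS^{T})$ into the rigid conclusion that the support consists of all-ones $\ell\times\ell$ blocks with exactly two per line, and in particular that each shared-support class has size exactly $\ell$. For $\ell=3$ this is compounded by the need first to exclude the spurious row/column patterns (the $+2$ pattern and the all-$\pm1$ pattern) so that the negatives cleanly form a permutation matrix. Full indecomposability must be invoked with care at two separate points — to kill basis rows in the entry analysis and to force a single cycle in the arrangement step — and making both uses precise is where the real work lies.
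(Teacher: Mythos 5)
The paper does not prove this statement at all: it is quoted verbatim from Wang and Xu \cite{WX1}, so there is no internal proof to compare against. Judged on its own terms, your proposal is a sensible reconstruction of the kind of argument that classification requires, and several pieces are correct and complete: the row/column sum normalization, the enumeration of integer rows of $2Q$ with squared length $4$ and sum $2$, the use of full indecomposability to kill basis rows, the identity $SS^{T}=\ell\bigl(SP^{T}+PS^{T}\bigr)$ obtained from $Q=\tfrac1\ell S-P$ and $PP^{T}=I$, and the final step where a two-regular block-support pattern decomposes into cycles and full indecomposability forces a single cycle. (One slip there: $S$ is $2\ell$-regular, not $\ell^{2}$-regular --- the diagonal of $SS^{T}$ is the support size $2\ell$, which happens to equal $\ell^{2}$ only when $\ell=2$; your concrete values $4$ and $6$ are right, the general formula is not.)

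The genuine gap is that the two steps you yourself flag as ``where the real work lies'' are exactly the steps that constitute the theorem, and they are asserted rather than proved. First, for $\ell=3$ the row enumeration leaves three patterns --- five $+1$'s and one $-2$; one $+2$, three $+1$'s and two $-1$'s; and six $+1$'s with three $-1$'s --- and eliminating the last two is not a routine consequence of a single row's norm and sum: it requires playing row patterns against column patterns and against pairwise orthogonality, and without it the matrix $P$ of ``most negative entries'' need not be a permutation matrix, so the identity $SS^{T}=\ell(SP^{T}+PS^{T})$ is not even available. Second, the passage from the overlap values $o_{ij}\in\{0,\ell,2\ell\}$ to the conclusion that equal-support classes have size exactly $\ell$, that each row support is a union of exactly two column classes, and hence that the nonzero blocks are all-ones $\ell\times\ell$ blocks, does not follow from the overlap computation alone; one must separately rule out classes of the wrong size (e.g.\ singleton classes, or classes of size up to $2\ell$, which mutual orthogonality of the restricted rows does not forbid) and show that row supports are saturated by column classes. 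As written, the submission is a proof plan with the two load-bearing lemmas left open, so it does not yet establish the theorem.
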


where $O_1,I_1,J_1,Y_1=J_1-2I_1$ are zero matrix, Identity matrix, all-one matrix and square matrix of order 2. $O_2,I_2,J_2,  Y_2=J_2-3I_2$ are zero matrix, Identity matrix, all-one matrix and square matrix of order 3.

Let $Q$ be an arbitrary rational orthogonal matrix with level two or three
and $Qe=e$. Then $Q$ can be written as the following canonical
form by permuting rows and columns appropriately: $diag(Q_1,Q_2)$,
where $Q_i$'s are matrices in Theorem \ref{Q1} or an identity matrix.

\section{Characterization of adjacency matrix $A$ for which $Q_k^TAQ_k$ is a (0,1)-matrix}

Let $Q_k$ be a $k$ by $k$ fully indecomposable regular rational orthogonal matrix with level two or three, as given in Theorem~\ref{Q1}. In this section, we mainly provide two straightforward algorithms for constructing adjacency matrix $A$, such that $Q_k^TAQ_k$ is again a (0,1)-matrix, which gives partial answer to Question 1.

We divide the matrix $A$ into blocks, and denote by $A_{i,j}$ the $(i,j)$-th block of $A$.

If the regular rational orthogonal matrix $Q_k$ with $\ell=2$ and $k=2m$, then divide the matrix $A$ into  $m$ by $m$ blocks.

If the regular rational orthogonal matrix $Q_k$ with $\ell=3$ and $k=3m$, then divide the matrix $A$ into  $m$ by $m$ blocks.

Since the adjacency matrix is symmetric, in the following two algorithms we only consider the upper triangular part of the adjacency matrix $A$, generated from the top-left to the bottom-right along the diagonal.  For example, the order in which sub-matrix blocks are generated for 6 by 6 block $A$ is (see Fig. 1):

Step 1 generated the first diagonal block (Yellow): $A_{1,1},A_{2,2},A_{3,3},A_{4,4},A_{5,5},A_{6,6}$,

Step 2 generated the second diagonal block (Red):$A_{1,2},A_{2,3},A_{3,4},A_{4,5},A_{5,6}$,

Step 3 generated the third diagonal block (Green):$A_{1,3},A_{2,4},A_{3,5},A_{4,6}$,

Step 4 generated the forth diagonal block (Blue):$A_{1,4},A_{2,5},A_{3,6}$,

Step 5 generated  the fifth diagonal block (Magenta):$A_{1,5},A_{2,6}$

Step 6 generated the sixth diagonal block(Cyan):$A_{1,6}$.

After generation, we reassign $A_{i,i}$ with $\frac{1}{2}A_{i,i}$ (for all $i$) , then reassign $A$ with $A+A^T$, finally we obtain the complete matrix A.

\begin{figure}[htbp]
\centering
\subfloat{
\begin{minipage}[t]{0.15\textwidth}
   \centering
   \includegraphics[width=2cm,height=2cm]{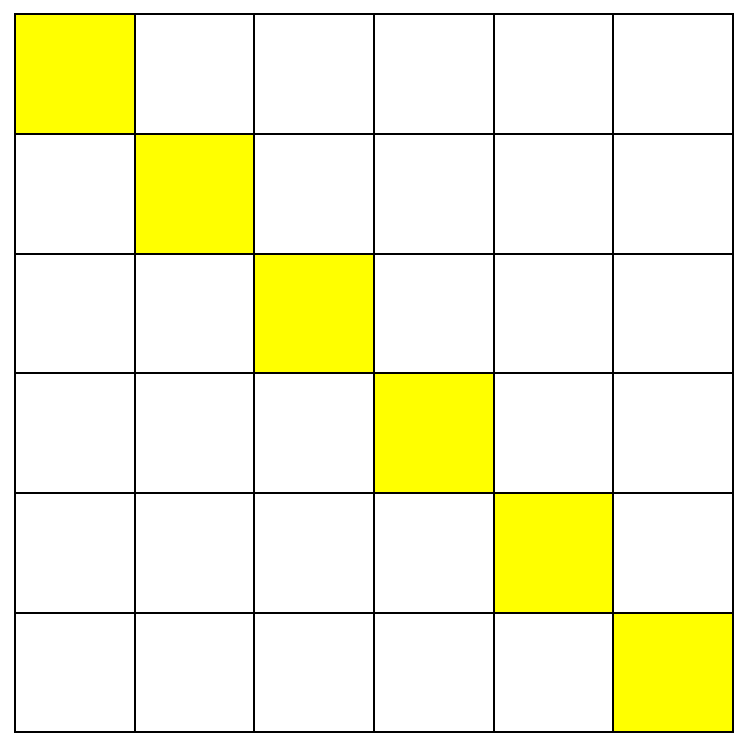}
\end{minipage}
}
\subfloat{
\begin{minipage}[t]{0.15\textwidth}
   \centering
   \includegraphics[width=2cm,height=2cm]{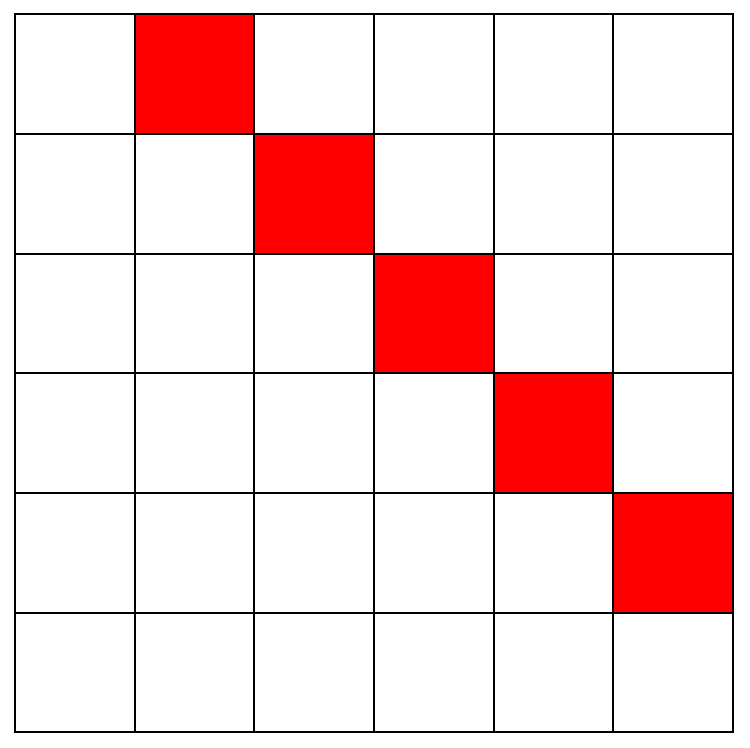}
\end{minipage}
}
\subfloat{
\begin{minipage}[t]{0.15\textwidth}
   \centering
   \includegraphics[width=2cm,height=2cm]{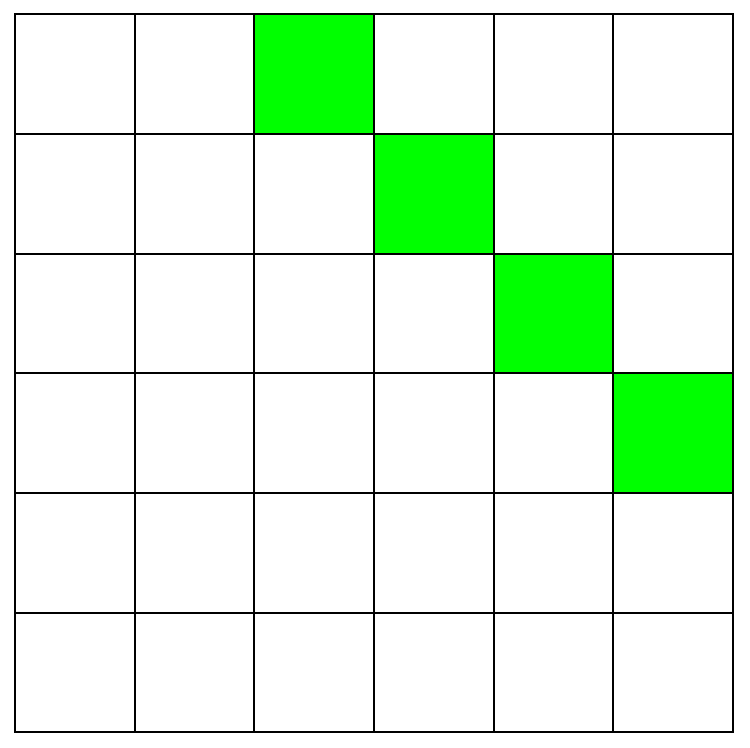}
\end{minipage}
}
\subfloat{
\begin{minipage}[t]{0.15\textwidth}
   \centering
   \includegraphics[width=2cm,height=2cm]{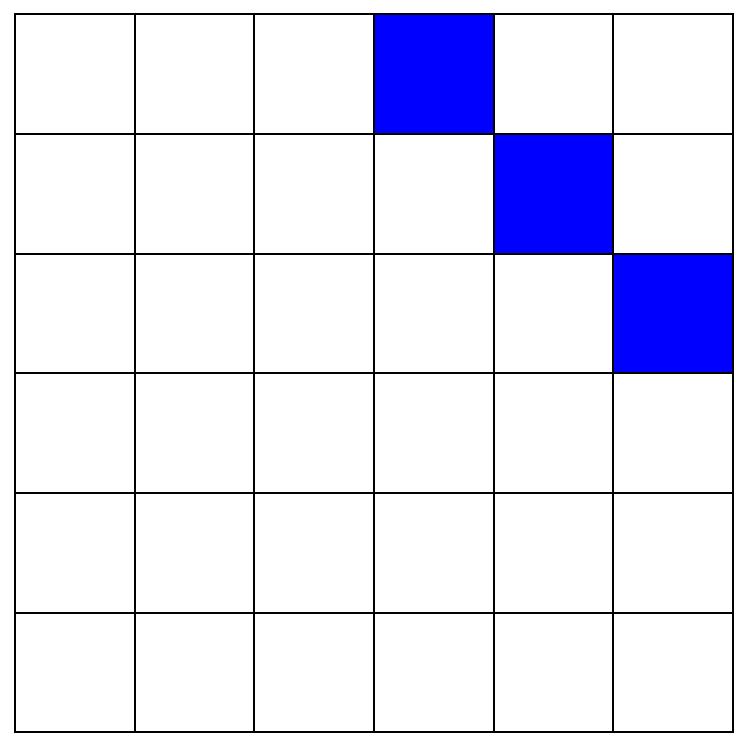}
\end{minipage}
}
\subfloat{
\begin{minipage}[t]{0.15\textwidth}
   \centering
   \includegraphics[width=2cm,height=2cm]{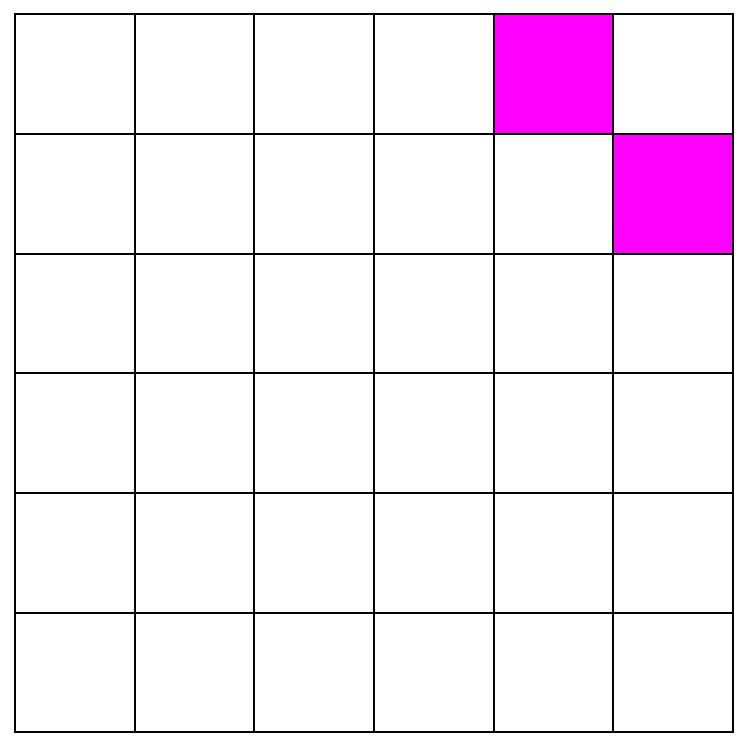}
\end{minipage}
}
\subfloat{
\begin{minipage}[t]{0.15\textwidth}
   \centering
   \includegraphics[width=2cm,height=2cm]{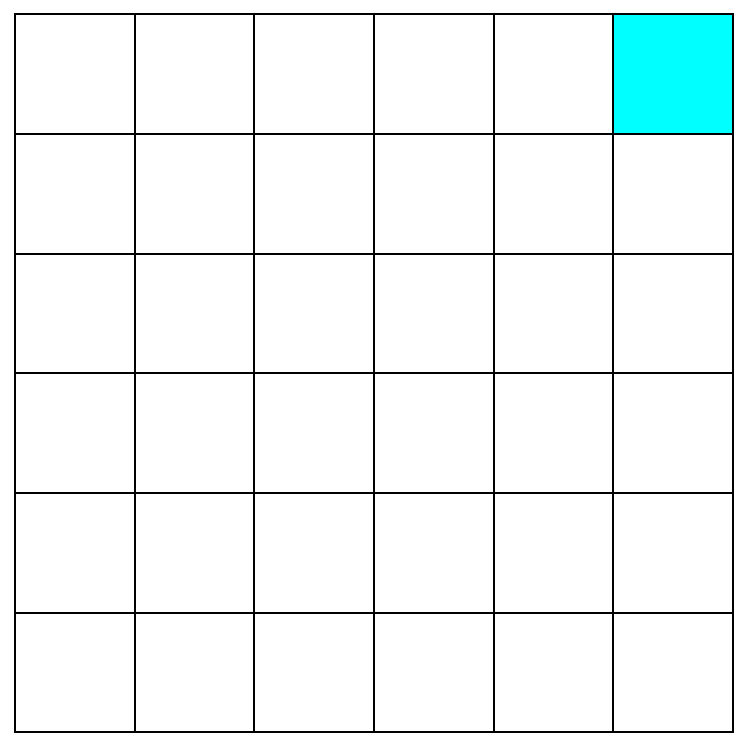}
\end{minipage}
}
\caption{Step generated order of adjacency matrix $A$}
\end{figure}

\noindent \textbf{3.1. $Q_k$ has the form in Theorem 2.1 case (i)}

For 2 by 2  $(0,1)$-matrix, there are 16 cases in total below.
$$A_1=\left[\begin{array}{cc}
0& 0\\
0& 0
\end{array}\right],A_2=\left[\begin{array}{cc}
1& 0\\
1& 0
\end{array}\right],A_3=\left[\begin{array}{cc}
1& 0\\
0& 1
\end{array}\right],A_4=\left[\begin{array}{cc}
1& 1\\
0& 0
\end{array}\right],$$
$$A_5=\left[\begin{array}{cc}
1& 1\\
1& 1
\end{array}\right],A_6=\left[\begin{array}{cc}
0& 1\\
0& 1
\end{array}\right],A_7=\left[\begin{array}{cc}
0&1\\
1&0
\end{array}\right],A_8=\left[\begin{array}{cc}
0& 0\\
1& 1
\end{array}\right],$$
$$A_9=\left[\begin{array}{cc}
1& 0\\
0& 0
\end{array}\right],A_{10}=\left[\begin{array}{cc}
0& 1\\
0& 0
\end{array}\right],A_{11}=\left[\begin{array}{cc}
0&0\\
1&0
\end{array}\right],A_{12}=\left[\begin{array}{cc}
0& 0\\
0& 1
\end{array}\right],$$
$$A_{13}=\left[\begin{array}{cc}
1& 1\\
1& 0
\end{array}\right],A_{14}=\left[\begin{array}{cc}
1& 0\\
1& 1
\end{array}\right],A_{15}=\left[\begin{array}{cc}
1&1\\
0&1
\end{array}\right],A_{16}=\left[\begin{array}{cc}
0& 1\\
1& 1
\end{array}\right].$$

Let $X_1=\left[\begin{array}{c}
J_1-2I_1\\
J_1
\end{array}\right]$ be a $2\times 1$ block matrix and $M=\left[\begin{array}{cc}
A_{i,j}&A_{i,j+1}\\
A_{i+1,j}&A_{i+1,j+1}
\end{array}\right]$ for $i=1,2,\cdots,m$ and $i\leq j$ (reducing mod $m$ if the index is larger than $m$) be a $2\times 2$ block matrix, where each sub-matrix is chosen from one of the matrix $A_t$ $(t=1,2,\ldots,16)$. By block matrix multiplication operation we have the $(i,i+1)$-th row and $(j,j+1)$-th column of  $Q_k^TAQ_k$ is $\frac{1}{4}X_1^TMX_1$. So $Q_k^TAQ_k$ is again (0,1)-matrix if only if $\frac{1}{4}X_1^TMX_1$ is a (0,1)-matrix.

The following two theorems play a key role in the algorithm which is used to construct adjacency matrix $A$ such that $Q_k^TAQ_k$ is a (0,1)-matrix.

\begin{them} \label{Q3}
Let $X_1=\left[\begin{array}{c}
J_1-2I_1\\
J_1
\end{array}\right]$ be a $2\times 1$ block matrix, $M=\left[\begin{array}{cc}
A_{i,j}&A_{i,j+1}\\
A_{i+1,j}&A_{i+1,j+1}
\end{array}\right]$, then $\frac{1}{4}X_1^TMX_1$ is a (0,1)-matrix if and only if either all these four sub-matrix $A_{i,j},A_{i,j+1},A_{i+1,j},A_{i+1,j+1}$ can only be chosen from $A_t$ $(t=1,2,\ldots,8)$ or  they can only be chosen from $A_s$ $(s=10,11,\ldots,16)$.
\end{them}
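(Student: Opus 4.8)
The plan is to convert the matrix condition into an arithmetic condition on the four blocks and then read off the dichotomy from parity. First I would record the two algebraic identities that make the whole computation collapse: since $Y_1 = J_1 - 2I_1$ and $J_1^2 = 2J_1$, we get $J_1 Y_1 = Y_1 J_1 = J_1^2 - 2J_1 = O$, while $Y_1^2 = 4I_1 - 2J_1$. Writing $X_1 = \left[\begin{smallmatrix} Y_1 \\ J_1\end{smallmatrix}\right]$ and $M = \left[\begin{smallmatrix} A_{i,j} & A_{i,j+1} \\ A_{i+1,j} & A_{i+1,j+1}\end{smallmatrix}\right]$, a direct block expansion (using $Y_1^T = Y_1$, $J_1^T = J_1$) gives
$$X_1^T M X_1 = Y_1 A_{i,j} Y_1 + Y_1 A_{i,j+1} J_1 + J_1 A_{i+1,j} Y_1 + J_1 A_{i+1,j+1} J_1,$$
so each of the four blocks enters in exactly one of the four roles $Y(\cdot)Y$, $Y(\cdot)J$, $J(\cdot)Y$, $J(\cdot)J$.

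The second step is to evaluate each role on a generic block $B=\left[\begin{smallmatrix} a & b \\ c & d\end{smallmatrix}\right]$. A one-line computation gives $J_1 B J_1 = \sigma(B)J_1$ where $\sigma(B)=a+b+c+d$ is the entry sum, $Y_1 B Y_1 = (a-b-c+d)\left[\begin{smallmatrix}1 & -1 \\ -1 & 1\end{smallmatrix}\right]$, $J_1 B Y_1 = \big((b+d)-(a+c)\big)\left[\begin{smallmatrix}1 & -1 \\ 1 & -1\end{smallmatrix}\right]$ and $Y_1 B J_1 = \big((c+d)-(a+b)\big)\left[\begin{smallmatrix}1 & 1 \\ -1 & -1\end{smallmatrix}\right]$. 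Summing and dividing by $4$, the four entries of $\frac{1}{4}X_1^T M X_1$ take the form $\frac{1}{4}(\pm p \pm q \pm r + s)$, where $p,q,r$ are the signed imbalances of $A_{i,j}$, $A_{i+1,j}$, $A_{i,j+1}$ in their respective roles and $s=\sigma(A_{i+1,j+1})$. Thus $\frac{1}{4}X_1^T M X_1$ is a $(0,1)$-matrix exactly when all four of these integers lie in $\{0,4\}$.

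For the forward direction I would exploit that the four entries are obtained from one another by flipping signs of $p,q,r$: differencing any two of them produces $2(p\pm q)$, $2(p\pm r)$, $2(q\pm r)$, each of which must land in $\{-4,0,4\}$. Since $p \equiv \sigma(A_{i,j})$, $q \equiv \sigma(A_{i+1,j})$, $r \equiv \sigma(A_{i,j+1})$ and $s=\sigma(A_{i+1,j+1})$ modulo $2$, this forces the four blocks to have entry sums of the same parity. Listing the sixteen blocks, $A_1,\dots,A_8$ are precisely those of even entry sum and $A_9,\dots,A_{16}$ those of odd entry sum, so the parity dichotomy is exactly the split between the two families, which I would use to organize the conclusion.

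The converse — that membership in one family genuinely guarantees $\frac{1}{4}X_1^T M X_1 \in \{0,1\}^{2\times 2}$ — is where I expect the real work, and it is the step I would treat most carefully. Parity alone only controls the evenness of the four integers $\pm p \pm q \pm r + s$; to force them into $\{0,4\}$ rather than into $\{-4,2,6,8\}$ one also needs the magnitude bookkeeping $p\pm s,\ q\pm s,\ r\pm s$ to stay within $\{0,2,4\}$, and even that is only pairwise information. The plan is to run a finite check organized by the value of $s$ (how many blocks are "full"), using that within a fixed family the imbalances $p,q,r$ range only over $\{-2,0,2\}$ (even family) or $\{-1,1\}$ (odd family). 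The main obstacle is precisely verifying that these range and sign constraints are simultaneously compatible in all four entries at once, since a configuration can satisfy every pairwise bound yet still produce an entry equal to $2$; this compatibility is the part that must be settled by a clean exhaustive case analysis rather than a single inequality.
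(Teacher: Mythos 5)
Your block expansion is correct, and your parity argument is a genuinely nicer route to the ``only if'' half than the paper's own proof, which is nothing more than an appeal to a computer enumeration of all $16^4$ cases. Two remarks on that half, though. First, your own computation places $A_9$ (entry sum $1$, odd) in the same parity class as $A_{10},\ldots,A_{16}$, and indeed the configuration with all four blocks equal to $A_9$ yields $X_1^TMX_1=\left[\begin{smallmatrix}0&0\\0&4\end{smallmatrix}\right]$, a valid $(0,1)$ case; so the second family in the statement must be $\{A_9,\ldots,A_{16}\}$ (the paper itself later calls it ``the other eight matrices''), and your conclusion should be phrased accordingly rather than forced to match the printed index set.

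The genuine gap is in the converse, which you correctly identify as ``where the real work is'' but then plan to establish by an exhaustive case analysis. That analysis cannot succeed, because the sufficiency direction, read literally, is false: membership of all four blocks in one parity family does \emph{not} guarantee that $\frac{1}{4}X_1^TMX_1$ is a $(0,1)$-matrix. For instance, taking $A_{i,j}=A_3=I_1$ and the other three blocks equal to $A_1=O_1$ (all in the even family) gives $X_1^TMX_1=Y_1^2=\left[\begin{smallmatrix}2&-2\\-2&2\end{smallmatrix}\right]$, with entries $\pm\frac{1}{2}$ after dividing by $4$; and taking $(A_{i,j},A_{i,j+1},A_{i+1,j},A_{i+1,j+1})=(A_{13},A_{10},A_{16},A_{14})$ (all in the odd family) gives $X_1^TMX_1=\left[\begin{smallmatrix}2&2\\6&2\end{smallmatrix}\right]$. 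In your notation these are exactly the configurations where some $\pm p\pm q\pm r+s$ equals $2$ or $6$ despite all parities matching --- the phenomenon you flagged as an ``obstacle'' is not an obstacle to be overcome but a disproof of the claim you set out to verify. The theorem can only be read as a necessary condition on which individual blocks may occur (the genuine sufficient conditions on four-tuples are what Step 4 of Algorithm 1 encodes), so your proof should establish the parity necessity, exhibit one valid configuration containing each block of each family to show neither family can be shrunk, and explicitly refrain from asserting the false converse.
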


\begin{proof}
 We know that for each sub-matrix $A_{i,j},A_{i,j+1},A_{i+1,j},A_{i+1,j+1}$ it can be chosen from $A_t (t=1,2,\cdots,16)$. So there are 65536 possibilities in total such that $\frac{1}{4}X_1^TMX_1$ is a (0,1)-matrix. All possibilities can be verified directly by computer. Finally, the result of the calculation shows that the conclusion of the theorem holds.
\end{proof}

\begin{them} \label{Q4}
Let $X_1=\left[\begin{array}{c}
J_1-2I_1\\
J_1
\end{array}\right]$ be a $2\times 1$ block matrix, $M=\left[\begin{array}{cc}
A_{i,i}&A_{i,i+1}\\
A_{i,i+1}^T&A_{i+1,i+1}
\end{array}\right]$, then $\frac{1}{4}X_1^TMX_1$ is a (0,1)-matrix if and only if the first diagonal block $A_{1,1}=A_{2,2}=\cdots=A_{m,m}=A_1$ or $A_{1,1}=A_{2,2}=\cdots=A_{m,m}=A_7$. The second diagonal block  $A_{i,i+1}$ can only be chosen from one of these six matrices $A_t (t=1,2,3,5,6,7)$.
\end{them}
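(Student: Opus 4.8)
The plan is to convert the condition ``$\frac14 X_1^T M X_1$ is a $(0,1)$-matrix'' into explicit scalar conditions on the three blocks $A_{i,i}$, $A_{i,i+1}$, $A_{i+1,i+1}$ that make up $M$, and then read off the claimed restrictions. First I would record the elementary identities governing the two building blocks $J_1$ and $Y_1=J_1-2I_1$, namely $J_1^2=2J_1$, $Y_1^2=-2Y_1$, and the crucial orthogonality $Y_1J_1=J_1Y_1=O_1$. These follow from the relation $J_1=Y_1+2I_1$ together with a one-line check, and they are exactly what makes the subsequent expansion collapse.

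Next, since $A$ is an adjacency matrix, both diagonal blocks $A_{i,i}$ and $A_{i+1,i+1}$ are symmetric with zero diagonal, so each equals $\begin{bmatrix}0&d\\ d&0\end{bmatrix}$ for some $d\in\{0,1\}$, i.e.\ either $A_1$ or $A_7$ from the outset. Writing $A_{i,i}=d(Y_1+I_1)$ and $A_{i+1,i+1}=d'(Y_1+I_1)$ and expanding $X_1^TMX_1 = Y_1A_{i,i}Y_1 + Y_1A_{i,i+1}J_1 + J_1A_{i,i+1}^TY_1 + J_1A_{i+1,i+1}J_1$, the identities above kill the mixed $Y_1J_1$ terms and give $Y_1A_{i,i}Y_1=2dY_1$ and $J_1A_{i+1,i+1}J_1=2d'J_1$. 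A short computation shows the two cross terms combine to $\mathrm{diag}(2s,-2s)$, where $s$ is the difference of the two row sums of $A_{i,i+1}$. Altogether $\frac14 X_1^TMX_1 = \frac12\begin{bmatrix}-d+d'+s & d+d'\\ d+d' & -d+d'-s\end{bmatrix}$.

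From here the conclusion is immediate. Being a diagonal block of an adjacency matrix, this $2\times2$ matrix must be $(0,1)$ with zero diagonal; vanishing of the two diagonal entries forces $s=0$ and $d=d'$ simultaneously, and then the off-diagonal entry is $\frac12(d+d')=d\in\{0,1\}$ automatically. The condition $s=0$ says $A_{i,i+1}$ has equal row sums, and enumerating the $2\times2$ $(0,1)$-matrices with this property yields exactly $\{A_1,A_2,A_3,A_5,A_6,A_7\}$, the claimed list. The condition $d=d'$ says $A_{i,i}$ and $A_{i+1,i+1}$ carry the same value; running this over all $i$ (cyclically, as the index wraps mod $m$, so that $A_{m,m}$ is tied back to $A_{1,1}$) chains every diagonal block to the next and forces $A_{1,1}=\cdots=A_{m,m}$, uniformly $A_1$ (if $d=0$) or $A_7$ (if $d=1$). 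The converse is the same computation read backwards: with $d=d'$ and $s=0$ each diagonal block evaluates to $\begin{bmatrix}0&d\\ d&0\end{bmatrix}$, a legitimate $(0,1)$ block.

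The routine parts are the $2\times2$ arithmetic and the enumeration of equal-row-sum matrices. The one step deserving care, and the only place where the global ``all equal'' statement (as opposed to a purely local, block-by-block restriction) actually comes from, is the chaining argument: the local constraint only says adjacent diagonal blocks agree, and it is the cyclic structure of $Q_k$ that propagates this into a single common value across all $m$ blocks. I would also be careful to treat the cross terms $Y_1A_{i,i+1}J_1$ and $J_1A_{i,i+1}^TY_1$ together rather than separately, since individually neither is symmetric and only their sum has the clean diagonal form $\mathrm{diag}(2s,-2s)$ that makes the zero-diagonal condition transparent.
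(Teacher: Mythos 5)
Your proof is correct, but it takes a genuinely different route from the paper's. The paper's proof is pure enumeration: it observes that the diagonal blocks must be $A_1$ or $A_7$ (symmetric, zero diagonal), invokes Theorem 3.2 to cut the off-diagonal block down to $A_1,\ldots,A_8$, and then checks the resulting $2\times2\times8=32$ cases by computer. You instead expand $X_1^TMX_1$ symbolically using $J_1^2=2J_1$, $Y_1^2=-2Y_1$, $Y_1J_1=J_1Y_1=O_1$, obtaining the closed form $\frac12\bigl[\begin{smallmatrix}-d+d'+s & d+d'\\ d+d' & -d+d'-s\end{smallmatrix}\bigr]$, from which the characterization (equal diagonal blocks, equal row sums of $A_{i,i+1}$) drops out; your computations check against a direct recalculation, and the equal-row-sum $2\times2$ $(0,1)$-matrices are indeed exactly $\{A_1,A_2,A_3,A_5,A_6,A_7\}$. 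This buys an actual explanation of \emph{why} $A_4$ and $A_8$ are excluded from the second diagonal and why the diagonal blocks must chain to a common value, which the computer check does not provide, and it makes the cyclic propagation to the global statement $A_{1,1}=\cdots=A_{m,m}$ explicit (the paper leaves this implicit). One small point to tighten: you derive $s=0$ and $d=d'$ from the \emph{zero-diagonal} requirement, which is not literally part of the hypothesis ``$\frac14X_1^TMX_1$ is a $(0,1)$-matrix''; but this is harmless, since $(0,1)$-ness alone already forces $d+d'\in\{0,2\}$ from the off-diagonal entry $\frac{d+d'}{2}$, hence $d=d'$, and then the two diagonal entries $\pm\frac{s}{2}$ both lying in $\{0,1\}$ forces $s=0$ — so the same conclusion follows under the weaker hypothesis.
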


\begin{proof}

For the first diagonal block $A_{i,i}$ and $A_{i+1,i+1}$ there are two possibilities which they can be chosen from $A_1$ or $A_7$. For the second diagonal block $A_{i,i+1}$, By theorem \ref{Q3} there are 8 possibilities which it can be chosen from $A_t (t=1,2,\cdots,8)$. So there are 32 possibilities in total such that $\frac{1}{4}X_1^TMX_1$ is a (0,1)-matrix.  All possibilities can be verified directly by computer and finally we found that $\frac{1}{4}X_1^TMX_1$ is a (0,1)-matrix if and only if $A_{1,1}=A_{2,2}=\cdots=A_{m,m}=A_1$ or $A_{1,1}=A_{2,2}=\cdots=A_{m,m}=A_7$. In addition, $A_{i,i+1}$ can only be chosen from one of these six matrices $A_t (t=1,2,3,5,6,7)$. We complete the proof.
\end{proof}

By theorem \ref{Q3} and theorem \ref{Q4}, only $A_t$ $(t=1,2,\ldots,8)$ are useful for constructing the adjacency matrix $A$ and the other eight matrices $A_s$ $(s=10,11,\ldots,16)$ are not satisfied. Partition the $2m\times 2m$ adjacency matrix $A$ into 2 by 2 blocks and denote by $A_{i,j}$ the $(i,j)$-th block of $A$. The adjacency matrices of graphs such that $Q_k^TAQ_k$ is a (0,1)-matrix are generated as follows:

\noindent

\textbf{Algorithm 1:}

\begin{itemize}
  \item[\textbf{Step 1.}] Input parameters $m$, matrix $A_i(i=1,2,\cdots,8)$, regular rational orthogonal matrix $Q_{2m}$, $A\leftarrow 0$.
  \item[\textbf{Step 2.}] For $i=1$ to $m$, $A_{i,i}\leftarrow A_1$ (for all $i$) or $A_{i,i}\leftarrow A_7$ (for all $i$).
  \item[\textbf{Step 3.}] For $i=1$ to $m-1$, $A_{i,i+1}$ can choose any one of the six matrices $A_i (i=1,2,3,5,6,7)$.
  \item[\textbf{Step 4.}] For $k=1$ to $m-2$,

for $i=m-1-k$ to $1$,

\vspace{0.1cm}

If submatrix $\{A_{i,i+k},A_{i+1,i+k},A_{i+1,i+k+1}\}=\{A_p,A_q,A_r\}(p=1,2,4,5,6,8;q=1,3,4,5,7,8;r=2,3,4,6,7,8)$, then $A_{i,i+k+1}$ can be any one matrix of $A_4,A_8$.

\vspace{0.1cm}

If submatrix $\{A_{i,i+k},A_{i+1,i+k},A_{i+1,i+k+1}\}=\{A_p,A_q,A_r\}(p=1,2,4,5,6,8;q=1,3,4,5,7,8;r=1,5)$ or $(p=1,2,4,5,6,8;q=2,6;r=2,3,4,6,7,8)$ or $(p=3,7;q=1,3,4,5,7,8;r=2,3,4,6,7,8)$, then $A_{i,i+k+1}$ can be any one matrix of $A_1,A_2,A_3,A_5,A_6,A_7$.
  \item[\textbf{Step 5.}]If $A_{i,i}= A_1$ (for all $i$), $A\leftarrow A+A^T$, else if $A_{i,i}= A_7$ (for all $i$), $A_{i,i}\leftarrow \frac{1}{2}A_{i,i}$ and $A\leftarrow A+A^T$; If $Q_{2m}^TAQ_{2m}$ is an adjacency matrix of a graph, output $A$; otherwise go to Step 1.
\end{itemize}

\noindent

\begin{exam}
 \rm{An adjacency matrix $A$ with the following form is randomly generated by the above algorithm corresponding to the given parameters, so we directly have $Q_k^TAQ_k$ is a (0,1)-matrix.}

\begin{eqnarray*}
\tiny{~~~~~~~~~~~A=\left[\begin{array}{cc|cc|cc|cc|cc|cc|cc|cc|cc}
  0&0&0&0&0&1&1&1&0&1&1&0&0&1&1&1&0&0\\ 0&0&0&0&1&0&0&0&0&1&0&1&0&1&0&0&0&0\\ \hline 0&0&0&0&1&1&0&0&0&0&0&1&1&1&0&1&0&1\\ 0&0&0&0&0&0&0&0&1&1&0&1&0&0&0&1&1&0\\ \hline 0&1&1&0&0&0&0&1&0&1&0&0&1&0&0&0&0&1\\ 1&0&1&0&0&0&1&0&0&1&1&1&1&0&0&0&1&0\\ \hline 1&0&0&0&0&1&0&0&0&0&0&1&0&0&1&0&0&0\\ 1&0&0&0&1&0&0&0&1&1&0&1&1&1&1&0&1&1\\ \hline 0&0&0&1&0&0&0&1&0&0&1&1&0&1&0&0&1&0\\ 1&1&0&1&1&1&0&1&0&0&0&0&0&1&0&0&0&1\\ \hline 1&0&0&0&0&1&0&0&1&0&0&0&0&0&1&0&1&1\\ 0&1&1&1&0&1&1&1&1&0&0&0&1&1&1&0&1&1\\ \hline 0&0&1&0&1&1&0&1&0&0&0&1&0&0&1&1&0&1\\ 1&1&1&0&0&0&0&1&1&1&0&1&0&0&0&0&0&1\\ \hline 1&0&0&0&0&0&1&1&0&0&1&1&1&0&0&0&1&1\\ 1&0&1&1&0&0&0&0&0&0&0&0&1&0&0&0&0&0\\ \hline 0&0&0&1&0&1&0&1&1&0&1&1&0&0&1&0&0&0\\ 0&0&1&0&1&0&0&1&0&1&1&1&1&1&1&0&0&0
\end{array}\right]},
\end{eqnarray*}
\begin{eqnarray*}
\tiny{Q_k^TAQ_k=\left[\begin{array}{cc|cc|cc|cc|cc|cc|cc|cc|cc}
    0&0&0&0&0&0&1&0&1&1&1&0&0&0&0&1&0&0 \\ 0&0&0&0&1&1&1&0&0&0&1&0&1&1&1&0&0&0 \\ \hline 0&0&0&0&0&1&0&0&1&0&1&1&1&0&0&1&0&1 \\ 0&0&0&0&1&0&0&0&1&0&0&0&1&0&1&0&1&0 \\ \hline 0&1&0&1&0&0&1&0&1&1&0&1&1&1&0&0&1&1 \\ 0&1&1&0&0&0&1&0&0&0&0&1&0&0&0&0&0&0 \\ \hline 1&1&0&0&1&1&0&0&1&0&0&0&0&1&1&0&0&1 \\ 0&0&0&0&0&0&0&0&1&0&1&1&0&1&0&1&0&1 \\ \hline 1&0&1&1&1&0&1&1&0&0&1&0&1&1&0&0&1&0 \\ 1&0&0&0&1&0&0&0&0&0&1&0&0&0&0&0&0&1 \\ \hline 1&1&1&0&0&0&0&1&1&1&0&0&0&1&0&0&1&1 \\ 0&0&1&0&1&1&0&1&0&0&0&0&0&1&1&1&1&1 \\ \hline 0&1&1&1&1&0&0&0&1&0&0&0&0&0&1&0&0&0 \\ 0&1&0&0&1&0&1&1&1&0&1&1&0&0&1&0&1&1 \\ \hline 0&1&0&1&0&0&1&0&0&0&0&1&1&1&0&0&0&1 \\ 1&0&1&0&0&0&0&1&0&0&0&1&0&0&0&0&0&1 \\ \hline 0&0&0&1&1&0&0&0&1&0&1&1&0&1&0&0&0&0 \\ 0&0&1&0&1&0&1&1&0&1&1&1&0&1&1&1&0&0
\end{array}\right]}.
\end{eqnarray*}
\end{exam}

According to the Algorithm 1, we can also construct families of infinite graphs such that  $Q_k^TAQ_k$ is a (0,1)-matrix.

\begin{exam}
\rm{ Let $A$ be the adjacency matrix of flower-graph $G$ (Fig.2) with vertex number $k=2m$ ($m\geq 3$ and odd number).  Dividing  $2m\times 2m$ matrix $A$ into 2 by 2 blocks and denote $A_{i,j}$ be the $(i,j)$-block of $A$. Then the upper triangular part of matrix $A$ is the following:

 The first diagonal block $A_{i,i}$ ($i=1,2,\cdots,m$) equals to $A_7$, the $\frac{m+1}{2}$-th diagonal block $A_{i,i+\frac{m+1}{2}-1}$ ($i=1,2,\cdots,\frac{m+1}{2}$) equals to $A_2$, the $\frac{m+3}{2}$-th diagonal block $A_{i,i+\frac{m+1}{2}}$ ($i=1,2,\cdots,\frac{m-1}{2}$) equals to $A_4$. So $Q_k^TAQ_k$ is (0,1)-matrix.}
\end{exam}

\begin{eqnarray*}
\tiny{
A=\left[\begin{array}{cc|cc|cc|cc|cc}
 0& 1& 0& 0& 1& 0& 1& 1& 0& 0\\
 1& 0& 0& 0& 1& 0& 0& 0& 0& 0\\\hline
 0& 0& 0& 1& 0& 0& 1& 0& 1& 1\\
 0& 0& 1& 0& 0& 0& 1& 0& 0& 0\\\hline
 1& 1& 0& 0& 0& 1& 0& 0& 1& 0\\
 0& 0& 0& 0& 1& 0& 0& 0& 1& 0\\\hline
 1& 0& 1& 1& 0& 0& 0& 1& 0& 0\\
 1& 0& 0& 0& 0& 0& 1& 0& 0& 0\\\hline
 0& 0& 1& 0& 1& 1& 0& 0& 0& 1\\
 0& 0& 1& 0& 0& 0& 0& 0& 1& 0
\end{array}\right]},
\tiny{Q_k^TAQ_k=\left[\begin{array}{cc|cc|cc|cc|cc}
 0& 1& 0& 0& 0& 0& 0& 1& 0& 0\\
 1& 0& 0& 0& 1& 1& 0& 1& 0& 0\\\hline
 0& 0& 0& 1& 0& 0& 0& 0& 0& 1\\
 0& 0& 1& 0& 0& 0& 1& 1& 0& 1\\\hline
 0& 1& 0& 0& 0& 1& 0& 0& 0& 0\\
 0& 1& 0& 0& 1& 0& 0& 0& 1& 1\\\hline
 0& 0& 0& 1& 0& 0& 0& 1& 0& 0\\
 1& 1& 0& 1& 0& 0& 1& 0& 0& 0\\\hline
 0& 0& 0& 0& 0& 1& 0& 0& 0& 1\\
 0& 0& 1& 1& 0& 1& 0& 0& 1& 0
\end{array}\right]}.
\end{eqnarray*}

\begin{figure}[htbp]
\centering
\subfloat[]{
\begin{minipage}[t]{0.3\textwidth}
   \centering
   \includegraphics[width=2.5cm,height=2.5cm]{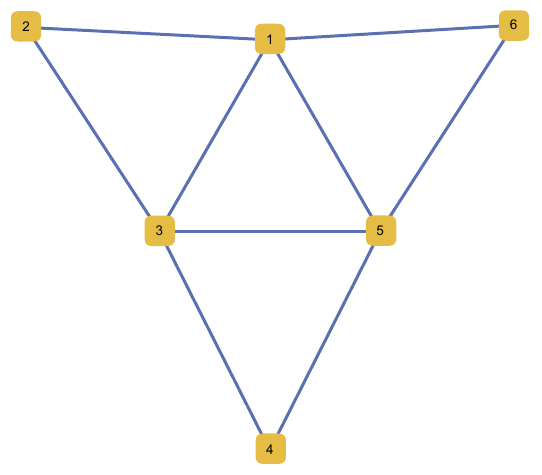}
\end{minipage}
}
\subfloat[]{
\begin{minipage}[t]{0.3\textwidth}
   \centering
   \includegraphics[width=2.5cm,height=2.5cm]{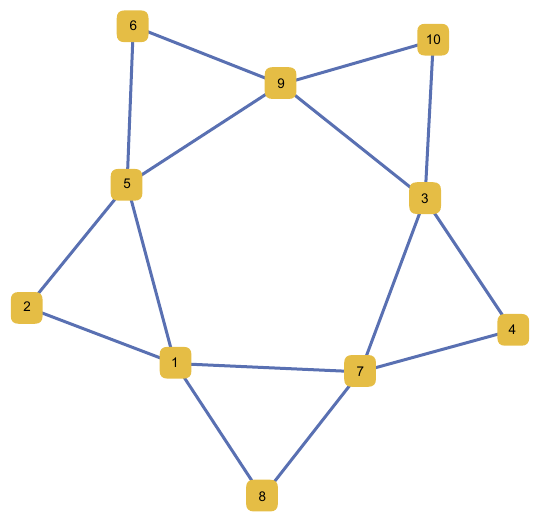}
\end{minipage}
}
\subfloat[]{
\begin{minipage}[t]{0.3\textwidth}
   \centering
   \includegraphics[width=2.5cm,height=2.5cm]{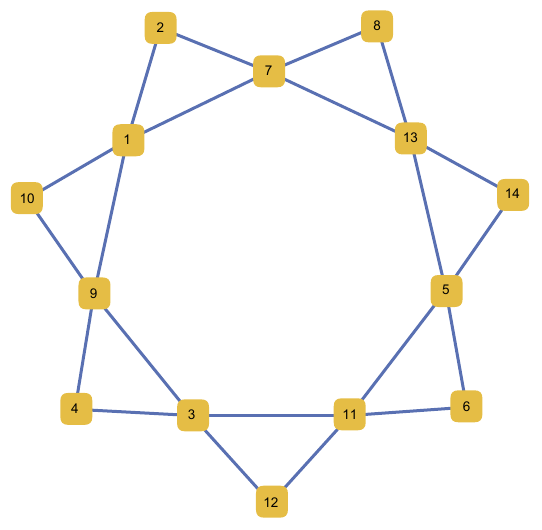}
\end{minipage}
}
\caption{flower-graph $G$ with vertices labeling}
\end{figure}

\begin{exam}
\rm{ Let $A$ be the adjacency matrix of sunflower-graph $G$ (Fig.3) with vertex number $k=2m$ ($m\geq 5$ and odd number).  Dividing  $2m\times 2m$ matrix $A$ into 2 by 2 blocks and denote $A_{i,j}$ be the $(i,j)$-block of $A$. Then the upper triangular part of matrix $A$ is the following:

The $\frac{m-1}{2}$-th diagonal block $A_{i,i+\frac{m-1}{2}-1}$ ($i=1,2,\cdots,\frac{m-1}{2}+2$) and the $\frac{m+3}{2}$-th diagonal block $A_{i,i+\frac{m-1}{2}+1}$ ($i=1,2,\cdots,\frac{m-1}{2}$) all equals to matrix $A_6$, the $\frac{m+1}{2}$-th diagonal block $A_{i,i+\frac{m-1}{2}}$ ($i=1,2,\cdots,\frac{m-1}{2}+1$) and the $\frac{m+5}{2}$-th diagonal block $A_{i,i+\frac{m-1}{2}+2}$ ($i=1,2,\cdots,\frac{m-1}{2}-1$) all equals to matrix $A_8$. So $Q_k^TAQ_k$ is (0,1)-matrix.}
\end{exam}

\begin{eqnarray*}
\tiny{
~~~~~~~~~~A=\left[\begin{array}{cc|cc|cc|cc|cc|cc|cc}
 0& 0& 0& 0& 0& 1& 0& 0& 0& 1& 0& 0& 0& 0\\
 0& 0& 0& 0& 0& 1& 1& 1& 0& 1& 1& 1& 0& 0\\\hline
 0& 0& 0& 0& 0& 0& 0& 1& 0& 0& 0& 1& 0& 0\\
 0& 0& 0& 0& 0& 0& 0& 1& 1& 1& 0& 1& 1& 1\\\hline
 0& 0& 0& 0& 0& 0& 0& 0& 0& 1& 0& 0& 0& 1\\
 1& 1& 0& 0& 0& 0& 0& 0& 0& 1& 1& 1& 0& 1\\\hline
 0& 1& 0& 0& 0& 0& 0& 0& 0& 0& 0& 1& 0& 0\\
 0& 1& 1& 1& 0& 0& 0& 0& 0& 0& 0& 1& 1& 1\\\hline
 0& 0& 0& 1& 0& 0& 0& 0& 0& 0& 0& 0& 0& 1\\
 1& 1& 0& 1& 1& 1& 0& 0& 0& 0& 0& 0& 0& 1\\\hline
 0& 1& 0& 0& 0& 1& 0& 0& 0& 0& 0& 0& 0& 0\\
 0& 1& 1& 1& 0& 1& 1& 1& 0& 0& 0& 0& 0& 0\\\hline
 0& 0& 0& 1& 0& 0& 0& 1& 0& 0& 0& 0& 0& 0\\
 0& 0& 0& 1& 1& 1& 0& 1& 1& 1& 0& 0& 0& 0
\end{array}\right]},
\end{eqnarray*}

\begin{eqnarray*}
\tiny{
Q_k^TAQ_k=\left[\begin{array}{cc|cc|cc|cc|cc|cc|cc}
 0& 0& 0& 0& 1& 1& 1& 0& 1& 1& 1& 0& 0& 0\\
 0& 0& 0& 0& 0& 0& 1& 0& 0& 0& 1& 0& 0& 0\\\hline
 0& 0& 0& 0& 0& 0& 1& 1& 1& 0& 1& 1& 1& 0\\
 0& 0& 0& 0& 0& 0& 0& 0& 1& 0& 0& 0& 1& 0\\\hline
 1& 0& 0& 0& 0& 0& 0& 0& 1& 1& 1& 0& 1& 1\\
 1& 0& 0& 0& 0& 0& 0& 0& 0& 0& 1& 0& 0& 0\\\hline
 1& 1& 1& 0& 0& 0& 0& 0& 0& 0& 1& 1& 1& 0\\
 0& 0& 1& 0& 0& 0& 0& 0& 0& 0& 0& 0& 1& 0\\\hline
 1& 0& 1& 1& 1& 0& 0& 0& 0& 0& 0& 0& 1& 1\\
 1& 0& 0& 0& 1& 0& 0& 0& 0& 0& 0& 0& 0& 0\\\hline
 1& 1& 1& 0& 1& 1& 1& 0& 0& 0& 0& 0& 0& 0\\
 0& 0& 1& 0& 0& 0& 1& 0& 0& 0& 0& 0& 0& 0\\\hline
 0& 0& 1& 1& 1& 0& 1& 1& 1& 0& 0& 0& 0& 0\\
 0& 0& 0& 0& 1& 0& 0& 0& 1& 0& 0& 0& 0& 0
\end{array}\right]}.
\end{eqnarray*}

\begin{figure}[htbp]
\centering
\subfloat[]{
\begin{minipage}[t]{0.3\textwidth}
   \centering
   \includegraphics[width=3cm,height=3cm]{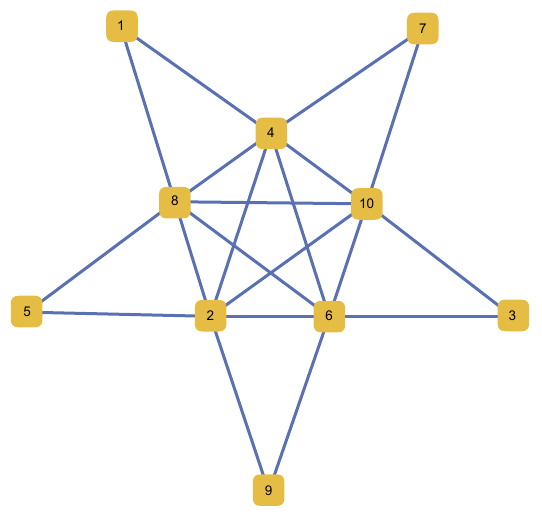}
\end{minipage}
}
\subfloat[]{
\begin{minipage}[t]{0.3\textwidth}
   \centering
   \includegraphics[width=3cm,height=3cm]{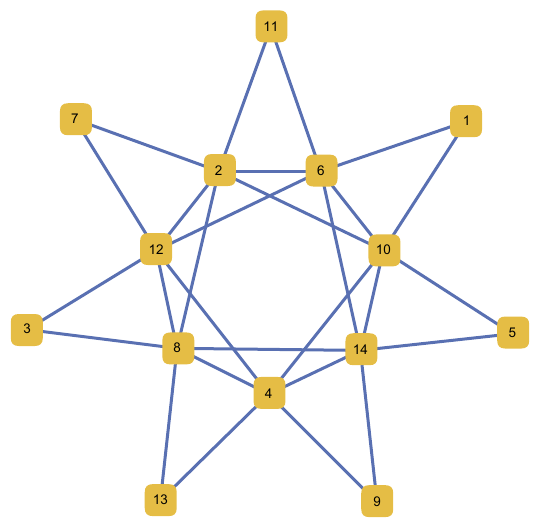}
\end{minipage}
}
\subfloat[]{
\begin{minipage}[t]{0.3\textwidth}
   \centering
   \includegraphics[width=3cm,height=3cm]{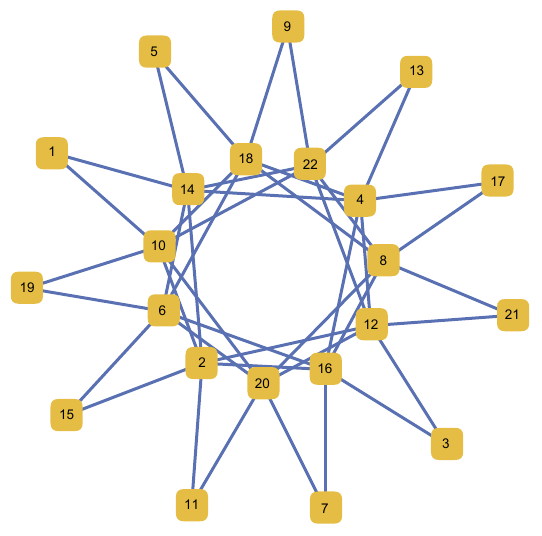}
\end{minipage}
}
\caption{sunflower-graph $G$ with vertices labeling}
\end{figure}

\begin{exam}
\rm{ Let $A$ be the adjacency matrix of graph $G$  with vertex number $n=2m$ ($m\geq 5$ and odd number).  Dividing  $2m\times 2m$ matrix $A$ into 2 by 2 blocks and denote $A_{i,j}$ be the $(i,j)$-block of $A$. Then the upper triangular part of matrix $A$ is the following:

The $\frac{m-1}{2}$-th diagonal block $A_{i,i+\frac{m-1}{2}-1}$ ($i=1,2,\cdots,\frac{m-1}{2}+2$) and the $\frac{m+5}{2}$-th diagonal block $A_{i,i+\frac{m-1}{2}+2}$ ($i=1,2,\cdots,\frac{m-1}{2}-1$) all equals to matrix $A_7$, the $\frac{m+1}{2}$-th diagonal block $A_{i,i+\frac{m-1}{2}}$ ($i=1,2,\cdots,\frac{m-1}{2}+1$) all equals to matrix $A_6$, the $\frac{m+3}{2}$-th diagonal block $A_{i,i+\frac{m-1}{2}+1}$ ($i=1,2,\cdots,\frac{m-1}{2}$) all equals to matrix$A_8$.}
\end{exam}

\begin{eqnarray*}
\tiny{
~~~~~~~~~~A=\left[\begin{array}{cc|cc|cc|cc|cc|cc|cc}
  0& 0& 0& 0& 0& 1& 0& 1& 0& 0& 0& 1& 0& 0\\
 0& 0& 0& 0& 1& 0& 0& 1& 1& 1& 1& 0& 0& 0\\\hline
 0& 0& 0& 0& 0& 0& 0& 1& 0& 1& 0& 0& 0& 1\\
 0& 0& 0& 0& 0& 0& 1& 0& 0& 1& 1& 1& 1& 0\\\hline
 0& 1& 0& 0& 0& 0& 0& 0& 0& 1& 0& 1& 0& 0\\
 1& 0& 0& 0& 0& 0& 0& 0& 1& 0& 0& 1& 1& 1\\\hline
 0& 0& 0& 1& 0& 0& 0& 0& 0& 0& 0& 1& 0& 1\\
 1& 1& 1& 0& 0& 0& 0& 0& 0& 0& 1& 0& 0& 1\\\hline
 0& 1& 0& 0& 0& 1& 0& 0& 0& 0& 0& 0& 0& 1\\
 0& 1& 1& 1& 1& 0& 0& 0& 0& 0& 0& 0& 1& 0\\\hline
 0& 1& 0& 1& 0& 0& 0& 1& 0& 0& 0& 0& 0& 0\\
 1& 0& 0& 1& 1& 1& 1& 0& 0& 0& 0& 0& 0& 0\\\hline
 0& 0& 0& 1& 0& 1& 0& 0& 0& 1& 0& 0& 0& 0\\
 0& 0& 1& 0& 0& 1& 1& 1& 1& 0& 0& 0& 0& 0
\end{array}\right]},
\end{eqnarray*}

\begin{eqnarray*}
\tiny{
Q_k^TAQ_k=\left[\begin{array}{cc|cc|cc|cc|cc|cc|cc}
  0& 0& 0& 0& 0& 1& 1& 1& 1& 0& 0& 1& 0& 0\\
 0& 0& 0& 0& 1& 0& 0& 0& 1& 0& 1& 0& 0& 0\\\hline
 0& 0& 0& 0& 0& 0& 0& 1& 1& 1& 1& 0& 0& 1\\
 0& 0& 0& 0& 0& 0& 1& 0& 0& 0& 1& 0& 1& 0\\\hline
 0& 1& 0& 0& 0& 0& 0& 0& 0& 1& 1& 1& 1& 0\\
 1& 0& 0& 0& 0& 0& 0& 0& 1& 0& 0& 0& 1& 0\\\hline
 1& 0& 0& 1& 0& 0& 0& 0& 0& 0& 0& 1& 1& 1\\
 1& 0& 1& 0& 0& 0& 0& 0& 0& 0& 1& 0& 0& 0\\\hline
 1& 1& 1& 0& 0& 1& 0& 0& 0& 0& 0& 0& 0& 1\\
 0& 0& 1& 0& 1& 0& 0& 0& 0& 0& 0& 0& 1& 0\\\hline
 0& 1& 1& 1& 1& 0& 0& 1& 0& 0& 0& 0& 0& 0\\
 1& 0& 0& 0& 1& 0& 1& 0& 0& 0& 0& 0& 0& 0\\\hline
 0& 0& 0& 1& 1& 1& 1& 0& 0& 1& 0& 0& 0& 0\\
 0& 0& 1& 0& 0& 0& 1& 0& 1& 0& 0& 0& 0& 0
\end{array}\right]}.
\end{eqnarray*}

\begin{rem}
 \rm{We can change all the first diagonal blocks $A_7$ to $A_1$ or $A_1$ to $A_7$. For non first diagonal blocks replace some matrices $A_1$ by $A_5$, or some matrices $A_2$ by $A_6$, or some matrices $A_3$ by $A_7$ or some matrices $A_4$ by $A_8$ randomly to get other more families of infinite graphs such that  $Q_k^TAQ_k$ is a (0,1)-matrix.}
\end{rem}

\noindent \textbf{3.2. $Q_{3m}$ has the form in Theorem \ref{Q1} case(ii)}

For 3 by 3  $(0,1)$-matrix, there are 512 matrices in total and only eight symmetric matrices $A_1,A_2,\cdots,A_8 $ of the them have zero diagonal elements. Here we just give $A_1,A_2,\cdots,A_8 $ and will not list 512 matrices all.

\begin{equation*}
\begin{aligned}
&\tiny{A_1=\left[\begin{array}{ccc}
 0& 0& 0\\
 0& 0& 0\\
 0& 0& 0
\end{array}\right]},\tiny{A_2=\left[\begin{array}{ccc}
 0& 1& 0\\
 1& 0& 0\\
 0& 0& 0
\end{array}\right]},\tiny{A_3=\left[\begin{array}{ccc}
 0& 0& 1\\
 0& 0& 0\\
 1& 0& 0
\end{array}\right]},\tiny{A_4=\left[\begin{array}{ccc}
 0& 0& 0\\
 0& 0& 1\\
 0& 1& 0
\end{array}\right]},\\
&\tiny{A_5=\left[\begin{array}{ccc}
 0& 1& 1\\
 1& 0& 1\\
 1& 1& 0
\end{array}\right]},\tiny{A_6=\left[\begin{array}{ccc}
 0& 0& 1\\
 0& 0& 1\\
 1& 1& 0
\end{array}\right]},\tiny{A_7=\left[\begin{array}{ccc}
 0& 1& 0\\
 1& 0& 1\\
 0& 1& 0
\end{array}\right]},\tiny{A_8=\left[\begin{array}{ccc}
 0& 1& 1\\
 1& 0& 0\\
 1& 0& 0
\end{array}\right]}.
\end{aligned}
\end{equation*}

Let $A$ be the adjacency matrix of graph $G$ with vertex number $k=3m$. Dividing  $3m\times 3m$ matrix $A$ into 3 by 3 blocks and denote $A_{i,j}$ be the $(i,j)$-th block of $A$. To generate to the first diagonal block of $A$ we have the following theorem.

\begin{them} \label{Q5}
Let $X_1=\left[\begin{array}{c}
J_2-2I_2\\
J_2
\end{array}\right]$ be a $2\times 1$ block matrix, $M=\left[\begin{array}{cc}
A_{i,i}&A_{i,i+1}\\
A_{i,i+1}^T&A_{i+1,i+1}
\end{array}\right]$, then $\frac{1}{9}X_1^TMX_1$ is a (0,1)-matrix if and only if one of the following is true:

\begin{enumerate}
  \item the first diagonal block $A_{1,1}=A_{2,2}=\cdots=A_{m,m}=A_1$ or $A_{1,1}=A_{2,2}=\cdots=A_{m,m}=A_5$. And the second diagonal block  $A_{i,i+1}$ ($i=1,2,\cdots,m-1$) can only be chosen from one of 56 matrices $B_1,B_2,\cdots,B_{56}$ (see Appendix).
  \item the first diagonal block $A_{i,i} (i=1,2,\cdots,m)$ can only choose any one of $A_2,A_3,A_4$ or all $A_{i,i} (i=1,2,\cdots,m)$ can only choose any one of $A_6,A_7,A_8$.
\end{enumerate}
\end{them}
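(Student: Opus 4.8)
The plan is to follow the strategy of Theorems~\ref{Q3} and~\ref{Q4}, but to replace the raw enumeration by an explicit expansion of $\frac19 X_1^{T}MX_1$ that exposes exactly which block conditions are forced. Write $D_1=A_{i,i}$, $C=A_{i,i+1}$, $D_2=A_{i+1,i+1}$, recall $Y_2=J_2-3I_2$, and use the identities $J_2BJ_2=s(B)J_2$, $J_2B=\mathbf{1}\,c(B)^{T}$ and $BJ_2=r(B)\,\mathbf{1}^{T}$, where $s(B)$, $r(B)$, $c(B)$ denote the total sum, row-sum vector and column-sum vector of a $3\times3$ matrix $B$. Since every diagonal block of a symmetric $A$ is symmetric with zero diagonal, $D_1$ and $D_2$ range over $A_1,\dots,A_8$, whereas $C$ is an arbitrary $3\times3$ $(0,1)$-matrix.

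First I would expand
\[
X_1^{T}MX_1=Y_2D_1Y_2+Y_2CJ_2+J_2C^{T}Y_2+J_2D_2J_2,
\]
and read off its entries in terms of the row sums $d_k=r_k(D_1)$, $r_k(C)$ and the total sums. Setting $T=s(D_1)+2s(C)+s(D_2)$ and $\sigma_k=d_k+r_k(C)$, a direct computation gives
\[
(X_1^{T}MX_1)_{kk}=T-6\sigma_k,\qquad (X_1^{T}MX_1)_{kl}=T-3\sigma_k-3\sigma_l+9(D_1)_{kl}\ (k\ne l),
\]
where $(D_1)_{kk}=0$ was used. The key observation is that the problem now decouples: since $Q_k^{T}AQ_k$ must have zero diagonal, requiring $(X_1^{T}MX_1)_{kk}=0$ forces $\sigma_k=T/6$ to be independent of $k$, and substituting this back collapses every off-diagonal entry to $9(D_1)_{kl}$. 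Thus once the diagonal vanishes the block is automatically a $(0,1)$-matrix (indeed it equals $D_1$), so no additional constraint survives.

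It then remains to unravel the two conditions hidden in $\sigma_k\equiv T/6$. Summing over $k$ gives $s(D_1)+s(C)=T/2$, which simplifies to $s(D_1)=s(D_2)$; read cyclically around the circulant structure of Theorem~\ref{Q1}(ii) (indices mod $m$), this shows all diagonal blocks $A_{1,1},\dots,A_{m,m}$ share a common total sum. Since the eight admissible diagonal matrices have sums $s=0$ for $A_1$, $s=2$ for $A_2,A_3,A_4$, $s=4$ for $A_6,A_7,A_8$ and $s=6$ for $A_5$, this alone yields the four families of the statement: the diagonal blocks are all $A_1$, all $A_5$, all from $\{A_2,A_3,A_4\}$, or all from $\{A_6,A_7,A_8\}$. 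Finally the requirement that $\sigma_k=d_k+r_k(C)$ be constant says $C$ must have row-sum vector $(\sigma-d_1,\sigma-d_2,\sigma-d_3)$; when $D_1\in\{A_1,A_5\}$ the vector $d$ is constant, so $C$ ranges exactly over the $3\times3$ $(0,1)$-matrices with constant row sums, of which there are $1+27+27+1=56$, namely $B_1,\dots,B_{56}$.

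The step I expect to be the main obstacle is the global bookkeeping rather than any single algebraic identity. Two points need care: first, the equality $s(A_{i,i})=s(A_{i+1,i+1})$ is only pairwise, so one must genuinely chain it around the full cycle (including the wrap-around block coming from the top-right $J_2$) to conclude that all diagonal sums coincide and hence lie in one family; second, in the two non-regular families $\{A_2,A_3,A_4\}$ and $\{A_6,A_7,A_8\}$ the constancy of $\sigma_k$ still pins the admissible off-diagonal $C$ to prescribed non-constant row sums complementary to $d$, a restriction the statement leaves implicit, and one should check these choices are mutually consistent with the symmetry $A_{i+1,i}=A_{i,i+1}^{T}$. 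As a sanity check I would verify that the same expansion reproduces Theorem~\ref{Q4} in the level-two case, where it predicts the two diagonal options $A_1,A_7$ and exactly the six constant-row-sum $2\times2$ blocks $A_1,A_2,A_3,A_5,A_6,A_7$.
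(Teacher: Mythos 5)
Your argument is essentially correct, but it takes a genuinely different route from the paper: the paper's proof of Theorem \ref{Q5} is a pure exhaustive verification ($8\times 512\times 8=32768$ cases checked by computer), whereas you replace the enumeration by the closed-form expansion
$X_1^TMX_1=TJ_2-3\sigma\mathbf{1}^T-3\mathbf{1}\sigma^T+9A_{i,i}$ with $T=s(A_{i,i})+2s(A_{i,i+1})+s(A_{i+1,i+1})$ and $\sigma=r(A_{i,i})+r(A_{i,i+1})$, reducing everything to ``$\sigma$ is constant and $s(A_{i,i})=s(A_{i+1,i+1})$''. I checked the expansion and the bookkeeping: the four diagonal families are exactly the equal-sum classes $\{A_1\},\{A_5\},\{A_2,A_3,A_4\},\{A_6,A_7,A_8\}$, the $56$ matrices in the Appendix are precisely the $3\times3$ $(0,1)$-matrices with constant row sums ($1+27+27+1=56$), and your level-two sanity check against Theorem \ref{Q4} comes out right. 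Your approach buys considerably more than the paper's: it explains \emph{why} these families appear, shows that under the stated conditions the diagonal block of $Q^TAQ$ is in fact unchanged ($\tfrac19X_1^TMX_1=A_{i,i}$), scales to any level, and exposes that case~2 of the theorem as printed is incomplete as an ``if'' direction (the off-diagonal block is still constrained to have row sums complementary to $r(A_{i,i})$, as you note). The one gap to close: you derive the key constraint from $(X_1^TMX_1)_{kk}=0$, justified by appeal to ``$Q^TAQ$ must have zero diagonal'', but the theorem's hypothesis only says the entries lie in $\{0,9\}$, so you must rule out diagonal entries equal to $9$. This is easy here — $T-6\sigma_k\equiv s(A_{i,i})+s(A_{i+1,i+1})\equiv 0\pmod 2$ since symmetric zero-diagonal blocks have even sum, so a diagonal entry of $X_1^TMX_1$ can never be $9$ — but it should be said, since without it the ``only if'' direction is not established from the stated hypothesis. (You also silently, and correctly, read the $J_2-2I_2$ in the statement of $X_1$ as the typo it must be for $J_2-3I_2$.)
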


\begin{proof}
For the first diagonal block $A_{i,i}$ and $A_{i+1,i+1}$ there are 8 possibilities which they can be chosen from $A_1, A_2,\cdots A_8$. For the second diagonal block $A_{i,i+1}$ there are 512 possibilities . So there are 32768 possibilities in total such that $\frac{1}{9}X_1^TMX_1$ is a (0,1)-matrix.  All these possibilities can be verified directly by computer. So the conclusion is proven.
\end{proof}

 From Theorem \ref{Q5}, we can see that it is too complicated  if we consider all 512 matrices. In order to simplify the algorithm, we only consider two special cases $A_{1,1}=A_{2,2}=\cdots=A_{m,m}=A_1$ or $A_{1,1}=A_{2,2}=\cdots=A_{m,m}=A_5$. Next we select eight (0,1)-matrices which will be used in the following algorithm, mark symbol matrix as $C_1,C_2,\cdots,C_8$ below.

\begin{equation*}
\begin{aligned}
&\tiny{C_1=\left[\begin{array}{ccc}
 0& 0& 0\\
 0& 0& 0\\
 0& 0& 0
\end{array}\right]},\tiny{C_2=\left[\begin{array}{ccc}
 1& 0& 0\\
 1& 0& 0\\
 1& 0& 0
\end{array}\right]},\tiny{C_3=\left[\begin{array}{ccc}
 1& 0& 0\\
 0& 1& 0\\
 0& 0& 1
\end{array}\right]},\tiny{C_4=\left[\begin{array}{ccc}
 1& 1& 1\\
 0& 0& 0\\
 0& 0& 0
\end{array}\right]},\\
&\tiny{C_5=\left[\begin{array}{ccc}
 1& 1& 1\\
 1& 1& 1\\
 1& 1& 1
\end{array}\right]},\tiny{C_6=\left[\begin{array}{ccc}
 0& 1& 1\\
 0& 1& 1\\
 0& 1& 1
\end{array}\right]},\tiny{C_7=\left[\begin{array}{ccc}
 0& 1& 1\\
 1& 0& 1\\
 1& 1& 0
\end{array}\right]},\tiny{C_8=\left[\begin{array}{ccc}
 0& 0& 0\\
 1& 1& 1\\
 1& 1& 1
\end{array}\right]}.
\end{aligned}
\end{equation*}

Partition the $3m\times 3m$ adjacency matrix $A$ into 3 by 3 blocks and denote by $A_{i,j}$ the $(i,j)$-th block of $A$. The adjacency matrix of graphs such that $Q_k^TAQ_k$ is a (0,1)-matrix is generated as follows:

\textbf{Algorithm 2:}

\begin{itemize}
  \item[\textbf{Step 1.}] Input parameters $m$, matrix $C_i(i=1,2,\cdots,8)$, regular rational orthogonal matrix $Q_{3m}$, $A\leftarrow 0$.
  \item[\textbf{Step 2.}] For $i=1$ to $m$, $A_{i,i}\leftarrow C_1$ (for all $i$) or $A_{i,i}\leftarrow C_7$ (for all $i$).
  \item[\textbf{Step 3.}] For $i=1$ to $m-1$, $A_{i,i+1}$ can choose any one of the six matrices $C_i (i=1,2,3,5,6,7)$.
  \item[\textbf{Step 4.}] For $k=1$ to $m-2$,

for $i=m-1-k$ to $1$,

\vspace{0.1cm}

If submatrix $\{A_{i,i+k},A_{i+1,i+k},A_{i+1,i+k+1}\}=\{C_p,C_q,C_r\}(p=1,2,4,5,6,8;q=1,3,4,5,7,8;r=6,7,8)$, then $A_{i,i+k+1}\leftarrow C_4$.

\vspace{0.1cm}

If submatrix $\{A_{i,i+k},A_{i+1,i+k},A_{i+1,i+k+1}\}=\{C_p,C_q,C_r\}(p=1,2,4,5,6,8;q=1,3,4,5,7,8;r=2,3,4)$, then $A_{i,i+k+1}\leftarrow C_8$.

\vspace{0.1cm}

If submatrix $\{A_{i,i+k},A_{i+1,i+k},A_{i+1,i+k+1}\}=\{C_p,C_q,C_r\}(p=1,2,4,5,6,8;q=1,3,4,5,7,8;r=1,5)$ or $\{A_{i,i+k},A_{i+1,i+k},A_{i+1,i+k+1}\}=\{C_p,C_2,C_r\}(p=1,2,4,5,6,8;r=6,7,8)$ or $\{A_{i,i+k},A_{i+1,i+k},A_{i+1,i+k+1}\}=\{C_p,C_6,C_r\}(p=1,2,4,5,6,8;r=2,3,4)$ or $\{A_{i,i+k},A_{i+1,i+k},A_{i+1,i+k+1}\}=\{C_7,C_q,C_r\}(q=1,3,4,5,7,8;r=6,7,8)$ or $\{A_{i,i+k},A_{i+1,i+k},A_{i+1,i+k+1}\}=\{C_3,C_q,C_r\}(q=1,3,4,5,7,8;r=2,3,4)$, then $A_{i,i+k+1}$ can be any one matrix of $C_1,C_2,C_3,C_5,C_6,C_7$.
  \item[\textbf{Step 5.}]If $A_{i,i}= C_1$ (for all $i$), then $A\leftarrow A+A^T$, else if $A_{i,i}= C_7$ (for all $i$), then $A_{i,i}\leftarrow \frac{1}{2}A_{i,i}$ and $A\leftarrow A+A^T$; If $Q_{3m}^TAQ_{3m}$ is an adjacency matrix of a graph, output $A$; otherwise go to Step 1.
\end{itemize}

\begin{exam}
\rm{Let $A$ be the adjacency matrix of graph $G$. Then, the matrix $Q_k^TAQ_k$ obtained is the adjacency matrix of graph $H$, which is cospectral with $G$ but nonisomorphic.}
\end{exam}
\begin{eqnarray*}
\tiny{
A=\left[\begin{array}{ccc|ccc|ccc|ccc}
 0& 0& 0& 0& 0& 0& 0& 0& 0& 1& 1& 1\\
 0& 0& 0& 0& 0& 0& 0& 0& 0& 1& 1& 1\\
 0& 0& 0& 0& 0& 0& 1& 1& 1& 1& 1& 1\\\hline
 0& 0& 0& 0& 0& 0& 1& 1& 0& 0& 0& 0\\
 0& 0& 0& 0& 0& 0& 1& 1& 0& 0& 0& 0\\
 0& 0& 0& 0& 0& 0& 1& 1& 0& 1& 1& 1\\\hline
 0& 0& 1& 1& 1& 1& 0& 0& 0& 1& 1& 0\\
 0& 0& 1& 1& 1& 1& 0& 0& 0& 1& 1& 0\\
 0& 0& 1& 0& 0& 0& 0& 0& 0& 1& 1& 0\\\hline
 1& 1& 1& 0& 0& 1& 1& 1& 1& 0& 0& 0\\
 1& 1& 1& 0& 0& 1& 1& 1& 1& 0& 0& 0\\
 1& 1& 1& 0& 0& 1& 0& 0& 0& 0& 0& 0
\end{array}\right]},
\tiny{
Q_k^TAQ_k=\left[\begin{array}{ccc|ccc|ccc|ccc}
 0&0&0&1&1&1&0&0&1&0&0&0\\
 0&0&0&1&1&1&0&0&1&0&0&0\\
 0&0&0&0&0&0&0&0&1&0&0&0\\\hline
 1&1&0&0&0&0&1&1&1&0&0&1\\
 1&1&0&0&0&0&1&1&1&0&0&1\\
 1&1&0&0&0&0&0&0&0&0&0&1\\\hline
 0&0&0&1&1&0&0&0&0&1&1&1\\
 0&0&0&1&1&0&0&0&0&1&1&1\\
 1&1&1&1&1&0&0&0&0&1&1&1\\\hline
 0&0&0&0&0&0&1&1&1&0&0&0\\
 0&0&0&0&0&0&1&1&1&0&0&0\\
 0&0&0&1&1&1&1&1&1&0&0&0
\end{array}\right]}.
\end{eqnarray*}

\begin{exam}
\rm{ Let $A$ be the adjacency matrix of graph $G$ with vertex number $k=3m$ ($m\geq 5$). By Algorithm 2, we have constructed infinite families of $A$ such that  $Q_k^TAQ_k$ is again (0,1)-matrix. Dividing  $3m\times 3m$ matrix $A$ into 3 by 3 blocks and denote $A_{i,j}$ be the $(i,j)$-block of $A$. Then the upper triangular part of matrix $A$ is the following:  the second diagonal block $A_{i,i+1}$ ($i=1,2,\cdots,m-1$) equals to $C_2$, the third diagonal block $A_{i,i+2}$ ($i=1,2,\cdots,m-2$) equals to $C_8$, $A_{1,m-1}=C_6$ and $A_{1,m}=C_3$.}
\end{exam}

\begin{eqnarray*}
\tiny{
~~~~~~~~~~~~A=\left[\begin{array}{ccc|ccc|ccc|ccc|ccc|ccc}
 0& 0& 0& 1& 0& 0& 0& 0& 0& 0& 0& 0& 0& 1& 1& 1& 0& 0\\
 0& 0& 0& 1& 0& 0& 1& 1& 1& 0& 0& 0& 0& 1& 1& 0& 1& 0\\
 0& 0& 0& 1& 0& 0& 1& 1& 1& 0& 0& 0& 0& 1& 1& 0& 0& 1\\\hline
 1& 1& 1& 0& 0& 0& 1& 0& 0& 0& 0& 0& 0& 0& 0& 0& 0& 0\\
 0& 0& 0& 0& 0& 0& 1& 0& 0& 1& 1& 1& 0& 0& 0& 0& 0& 0\\
 0& 0& 0& 0& 0& 0& 1& 0& 0& 1& 1& 1& 0& 0& 0& 0& 0& 0\\\hline
 0& 1& 1& 1& 1& 1& 0& 0& 0& 1& 0& 0& 0& 0& 0& 0& 0& 0\\
 0& 1& 1& 0& 0& 0& 0& 0& 0& 1& 0& 0& 1& 1& 1& 0& 0& 0\\
 0& 1& 1& 0& 0& 0& 0& 0& 0& 1& 0& 0& 1& 1& 1& 0& 0& 0\\\hline
 0& 0& 0& 0& 1& 1& 1& 1& 1& 0& 0& 0& 1& 0& 0& 0& 0& 0\\
 0& 0& 0& 0& 1& 1& 0& 0& 0& 0& 0& 0& 1& 0& 0& 1& 1& 1\\
 0& 0& 0& 0& 1& 1& 0& 0& 0& 0& 0& 0& 1& 0& 0& 1& 1& 1\\\hline
 0& 0& 0& 0& 0& 0& 0& 1& 1& 1& 1& 1& 0& 0& 0& 1& 0& 0\\
 1& 1& 1& 0& 0& 0& 0& 1& 1& 0& 0& 0& 0& 0& 0& 1& 0& 0\\
 1& 1& 1& 0& 0& 0& 0& 1& 1& 0& 0& 0& 0& 0& 0& 1& 0& 0\\\hline
 1& 0& 0& 0& 0& 0& 0& 0& 0& 0& 1& 1& 1& 1& 1& 0& 0& 0\\
 0& 1& 0& 0& 0& 0& 0& 0& 0& 0& 1& 1& 0& 0& 0& 0& 0& 0\\
 0& 0& 1& 0& 0& 0& 0& 0& 0& 0& 1& 1& 0& 0& 0& 0& 0& 0
\end{array}\right]},
\end{eqnarray*}

\begin{eqnarray*}
\tiny{
Q_k^TAQ_k=\left[\begin{array}{ccc|ccc|ccc|ccc|ccc|ccc}
 0& 0& 0& 1& 1& 1& 0& 1& 1& 0& 0& 0& 0& 0& 0& 1& 0& 0\\
 0& 0& 0& 0& 0& 0& 0& 1& 1& 0& 0& 0& 0& 0& 0& 0& 1& 0\\
 0& 0& 0& 0& 0& 0& 0& 1& 1& 0& 0& 0& 0& 0& 0& 0& 0& 1\\\hline
 1& 0& 0& 0& 0& 0& 1& 1& 1& 0& 1& 1& 0& 0& 0& 0& 0& 0\\
 1& 0& 0& 0& 0& 0& 0& 0& 0& 0& 1& 1& 0& 0& 0& 1& 1& 1\\
 1& 0& 0& 0& 0& 0& 0& 0& 0& 0& 1& 1& 0& 0& 0& 1& 1& 1\\\hline
 0& 0& 0& 1& 0& 0& 0& 0& 0& 1& 1& 1& 0& 1& 1& 0& 0& 0\\
 1& 1& 1& 1& 0& 0& 0& 0& 0& 0& 0& 0& 0& 1& 1& 0& 0& 0\\
 1& 1& 1& 1& 0& 0& 0& 0& 0& 0& 0& 0& 0& 1& 1& 0& 0& 0\\\hline
 0& 0& 0& 0& 0& 0& 1& 0& 0& 0& 0& 0& 1& 1& 1& 0& 1& 1\\
 0& 0& 0& 1& 1& 1& 1& 0& 0& 0& 0& 0& 0& 0& 0& 0& 1& 1\\
 0& 0& 0& 1& 1& 1& 1& 0& 0& 0& 0& 0& 0& 0& 0& 0& 1& 1\\\hline
 0& 0& 0& 0& 0& 0& 0& 0& 0& 1& 0& 0& 0& 0& 0& 1& 1& 1\\
 0& 0& 0& 0& 0& 0& 1& 1& 1& 1& 0& 0& 0& 0& 0& 0& 0& 0\\
 0& 0& 0& 0& 0& 0& 1& 1& 1& 1& 0& 0& 0& 0& 0& 0& 0& 0\\\hline
 1& 0& 0& 0& 1& 1& 0& 0& 0& 0& 0& 0& 1& 0& 0& 0& 0& 0\\
 0& 1& 0& 0& 1& 1& 0& 0& 0& 1& 1& 1& 1& 0& 0& 0& 0& 0\\
 0& 0& 1& 0& 1& 1& 0& 0& 0& 1& 1& 1& 1& 0& 0& 0& 0& 0
\end{array}\right]},
\end{eqnarray*}

\begin{rem}
\rm{We give the equivalent class about the matrix below (the corresponding complement matrix is also an equivalent class). A matrix in the above Algorithm 2 can be replaced by any of its corresponding equivalence class. Then we can get more families of infinite graphs such that  $Q_k^TAQ_k$ is a (0,1)-matrix.}
\begin{equation*}
    \begin{aligned}
    &\tiny{\left[\begin{array}{ccc}
 1& 0& 0\\
 1& 0& 0\\
 1& 0& 0
\end{array}\right]}\sim\tiny{\left[\begin{array}{ccc}
 0& 1& 0\\
 0& 1& 0\\
 0& 1& 0
\end{array}\right]}\sim\tiny{\left[\begin{array}{ccc}
 0& 0& 1\\
 0& 0& 1\\
 0& 0& 1
\end{array}\right]}\\
&\tiny{\left[\begin{array}{ccc}
 1& 0& 0\\
 0& 1& 0\\
 0& 0& 1
\end{array}\right]}\sim\tiny{\left[\begin{array}{ccc}
 1& 0& 0\\
 0& 0& 1\\
 0& 1& 0
\end{array}\right]}\sim\tiny{\left[\begin{array}{ccc}
 0& 1& 0\\
 1& 0& 0\\
 0& 0& 1
\end{array}\right]}\sim\tiny{\left[\begin{array}{ccc}
 0& 1& 0\\
 0& 0& 1\\
 1& 0& 0
\end{array}\right]}\sim\tiny{\left[\begin{array}{ccc}
 0& 0& 1\\
 1& 0& 0\\
 0& 1& 0
\end{array}\right]}\sim\tiny{\left[\begin{array}{ccc}
 0& 0& 1\\
 0& 1& 0\\
 1& 0& 0
\end{array}\right]}\\
&\tiny{\left[\begin{array}{ccc}
 1& 1& 1\\
 0& 0& 0\\
 0& 0& 0
\end{array}\right]}\sim\tiny{\left[\begin{array}{ccc}
 0& 0& 0\\
 1& 1& 1\\
 0& 0& 0
\end{array}\right]}\sim\tiny{\left[\begin{array}{ccc}
 0& 0& 0\\
 0& 0& 0\\
 1& 1& 1
\end{array}\right]}
\end{aligned}
\end{equation*}
\end{rem}

\section{Constructing cospectral graphs}

Based on the previous results, in this section, we shall give two new methods of constructing cospectral graphs by using regular rational orthogonal matrices with level 2 and level 3.

Let $Q_k$ ($k=2m$ or $3m$) be a regular rational orthogonal matrix of the form in Theorem~\ref{Q1}. Choose a graph $X$ such that $Q_k^TA(X)Q_k$ is a (0,1)-matrix using the methods in the previous sections. Let
\begin{eqnarray}\label{Y}
A(G)=\left[\begin{array}{cc}
  A(X) & C \\
  C^T & B
\end{array}\right],~ Q=\left[\begin{array}{cc}
  Q_k & O \\
  O & I_{n-k}
\end{array}\right].
\end{eqnarray}

Then we have
\begin{eqnarray}\label{LOL}
Q^TA(G)Q=\left[\begin{array}{cc}
  Q_k^TA(X)Q_k & Q_k^TC \\
  C^TQ_k & B
\end{array}\right].
\end{eqnarray}

In view of Eq.~(\ref{LOL}), in order for $Q^TA(G)Q$ to be the adjacency matrix of a graph $G^{'}$, we must ensure that $Q_k^TC$ is also a (0,1)-matrix. Next, we shall investigate the conditions, under which $Q_k^TC$ is a (0,1)-matrix.

\noindent \textbf{4.1. $Q_k$ has the form of case (i) in Theorem 2.1}

 Set $k=2m$. Let $G$ be the graph with adjacency matrix $A(G)$ as given in Eq.~\eqref{Y} and $X$ be the induced subgraph of $G$ with $V(X)=\{1,2,\ldots,2m-1,2m\}$. In paper \cite{LHM-WW}, the authors have given the edge-switching rules such that $Q^TAQ$ is a (0,1)-matrix.

\begin{them}\label{HH}\cite{LHM-WW}
Let $V(X)$ be partitioned into $m$ groups $\{1,2\}$,$\{3,4\}$,$\ldots$,$\{2m-1,2m\}$. Then $Q_k^TC$ is a (0,1)-matrix if and only if for any vertex $v\in V(G)\setminus V(X)$, $v$ is either adjacent to exactly one vertex in each group or adjacent to both vertices in the same group for some groups.
\end{them}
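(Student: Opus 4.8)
The plan is to reduce the matrix condition ``$Q_k^TC$ is a $(0,1)$-matrix'' to a column-by-column, and then a group-by-group, numerical condition by exploiting the block structure of $Q_k$. First I would transpose the canonical form of case~(i): since $Y_1$ and $J_1$ are symmetric, $Q_k^T$ is again block-circulant, now carrying $Y_1=J_1-2I_1$ on the diagonal and $J_1$ on the super-diagonal together with a single $J_1$ in the bottom-left corner (the wrap-around term). Partitioning $C$ into $m$ consecutive row-blocks $c_1,\dots,c_m$, each of size $2\times(n-2m)$, block multiplication then shows that the $i$-th row-block of $Q_k^TC$ equals $\tfrac12\bigl(Y_1c_i+J_1c_{i+1}\bigr)$, with indices read cyclically modulo $m$ (so $c_{m+1}=c_1$). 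This wrap-around is the only place where the corner $J_1$ enters, and it is exactly what will force a global conclusion at the end.

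Next I would fix a single vertex $v\in V(G)\setminus V(X)$, i.e.\ a single column of $C$, and write its restriction to group $i$ as $\binom{a_i}{b_i}$ with $a_i,b_i\in\{0,1\}$. Setting $s_i=a_i+b_i\in\{0,1,2\}$ (the number of neighbours of $v$ in group $i$) and $d_i=a_i-b_i\in\{-1,0,1\}$, a direct computation gives $Y_1\binom{a_i}{b_i}=\binom{-d_i}{d_i}$ and $J_1\binom{a_{i+1}}{b_{i+1}}=\binom{s_{i+1}}{s_{i+1}}$, so the relevant block of $Q_k^TC$ is $\tfrac12\binom{s_{i+1}-d_i}{s_{i+1}+d_i}$. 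Requiring this to be a $(0,1)$-vector is the same as requiring both $s_{i+1}-d_i$ and $s_{i+1}+d_i$ to lie in $\{0,2\}$.

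The core of the argument is then a short case analysis on $s_{i+1}$. If $s_{i+1}\in\{0,2\}$, the two membership conditions force $d_i=0$, hence $a_i=b_i$ and $s_i\in\{0,2\}$; if $s_{i+1}=1$, they force $d_i\in\{-1,1\}$, hence $a_i\neq b_i$ and $s_i=1$. In other words, the constraint coming from row-block $i$ is precisely the equivalence $s_i=1\Leftrightarrow s_{i+1}=1$. Because this holds for every $i$ around the full cycle $1\to2\to\cdots\to m\to1$, the predicate ``$s_i=1$'' is constant in $i$: either $s_i=1$ for all groups, meaning $v$ is adjacent to exactly one vertex in each group, or $s_i\in\{0,2\}$ for all groups, meaning $v$ is adjacent to both or to neither vertex of each group (the groups contributing $s_i=2$ being the ``some groups'' of the statement). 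Conversely, either uniform pattern makes each $\tfrac12(Y_1c_i+J_1c_{i+1})$ a $(0,1)$-block, so the condition is sufficient as well. I expect the only genuine subtlety to be bookkeeping the wrap-around index correctly; it is precisely this cyclic closure, absent from a merely block-triangular $Q_k$, that upgrades the local group-to-group compatibility into the clean global dichotomy asserted in the statement.
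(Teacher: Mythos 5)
Your proof is correct and follows essentially the same route the paper takes for the analogous level-three statement: the paper states this level-two theorem without proof (citing \cite{LHM-WW}), but its proof of the $3m$-case in Section 4.2 likewise reduces to a single column of $C$, writes out the blockwise linear conditions imposed by $Y$ and $J$, solves the resulting local system, and propagates the dichotomy cyclically around the groups. Your parametrization by $s_i=a_i+b_i$ and $d_i=a_i-b_i$ is just a cleaner packaging of that same case analysis, and the cyclic closure you emphasize is exactly what yields the global ``all groups of one type'' conclusion.
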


The details of the construction are as follows. Suppose that the vertex set $V(G)$ can be partitioned into two parts $V=V_1\cup V_2$. Let $X=G[V_1]$ be the subgraph induced by $V_1$.
Suppose the following conditions hold:
\begin{itemize}
\item  $X$ is a graph on $2m$ vertices such that $Q_{k}^TA(X)Q_{k}$ is a (0,1)-matrix, e.g., $X$ can be selected as the flower-graph in Example 2, Example 3 or Example 4.
\item $\forall v\in {V_2}$, $v$ is either adjacent to exactly one vertex in each group or adjacent to both vertices in the same group for some groups.
\end{itemize}

 Let $G^'$ be the graph obtained from $G$ as follows: the adjacency matrices induced by $V_1$ and $V_2$ in $G^'$ are $A(G^'[V_1])=Q_{k}^TA(X)Q_{k}$ and $G^'[V_2]=G[V_2]$, respectively. The rules for edge-switching between $V_1$ and $V_2$ can be described as follows:

\begin{itemize}
\item  $\forall v\in {V_2}$, suppose that $v$ is adjacent to exactly one vertex in each group $\{2i-1,2i\}$, for $i=1,2,\ldots,m$. Then switch all the edges between $\{v\}$ and $V_1$, i.e., delete the existing $m$ edges in $G$ and add the other $m$ non-edges to the new graph $G^'$;

\item $\forall v\in {V_2}$, if $v$ is adjacent to both vertices in the same group for some groups, then shift all the edges between $\{v\}$ and $V_1$ in the following way:
if $v$ is adjacent to both vertices $v_{2i+1}$ and $v_{2i+2}$ in $G$, then $G$ is adjacent to both vertices $v_{2i-1}$ and $v_{2i}$ in the new graph $G^'$, for $i=0,1,\ldots,m-1$ ($v_{2m+1}=v_1, v_{2m+2}=v_2$).
\end{itemize}

Then $G$ and $G^{'}$ are cospectral with cospectral complements.

\noindent
\begin{exam}
\rm{Let $X$ be the flower-graph given as in Fig.2. Then $Q_6^T A(X)Q_6$ is the adjacency matrix of another
flower-graph. Let $Q={\rm diag}(Q_6,I_3)$. Then $A^{'}=Q^TAQ$ is the adjacency matrix of the graph $G^{'}$. It is easy to see that the graphs $G$ and $G^{'}$ are non-isomorphic, since they have different degree sequences.
Here we just give the edges corresponding to matrix $C$ (see Fig.~4).}
\end{exam}

\begin{eqnarray*}
\tiny{
A=\left[\begin{array}{cccccc|ccc}
0&1&0&1&0&0&1&0&1\\
1&0&0&1&1&1&0&0&0\\
0&0&0&1&0&1&0&1&1\\
1&1&1&0&0&1&1&1&0\\
0&1&0&0&0&1&0&1&1\\
0&1&1&1&1&0&1&1&0\\\hline
1&0&0&1&0&1&0&1&0\\
0&0&1&1&1&1&1&0&1\\
1&0&1&0&1&0&0&1&0
\end{array}\right],Q^TAQ=\left[\begin{array}{cccccc|ccc}
0&1&1&1&1&0&0&1&0\\
1&0&0&0&1&0&1&1&1\\
1&0&0&1&1&1&1&1&0\\
1&0&1&0&0&0&0&1&1\\
1&1&1&0&0&1&1&0&0\\
0&0&1&0&1&0&0&0&1\\\hline
0&1&1&0&1&0&0&1&0\\
1&1&1&1&0&0&1&0&1\\
0&1&0&1&0&1&0&1&0
\end{array}\right]}.
\end{eqnarray*}
\begin{figure}[t]
\centering   
\subfloat[$G$]{
\begin{minipage}[t]{0.4\textwidth}
   \centering
   \includegraphics[width=6cm,height=2.2cm]{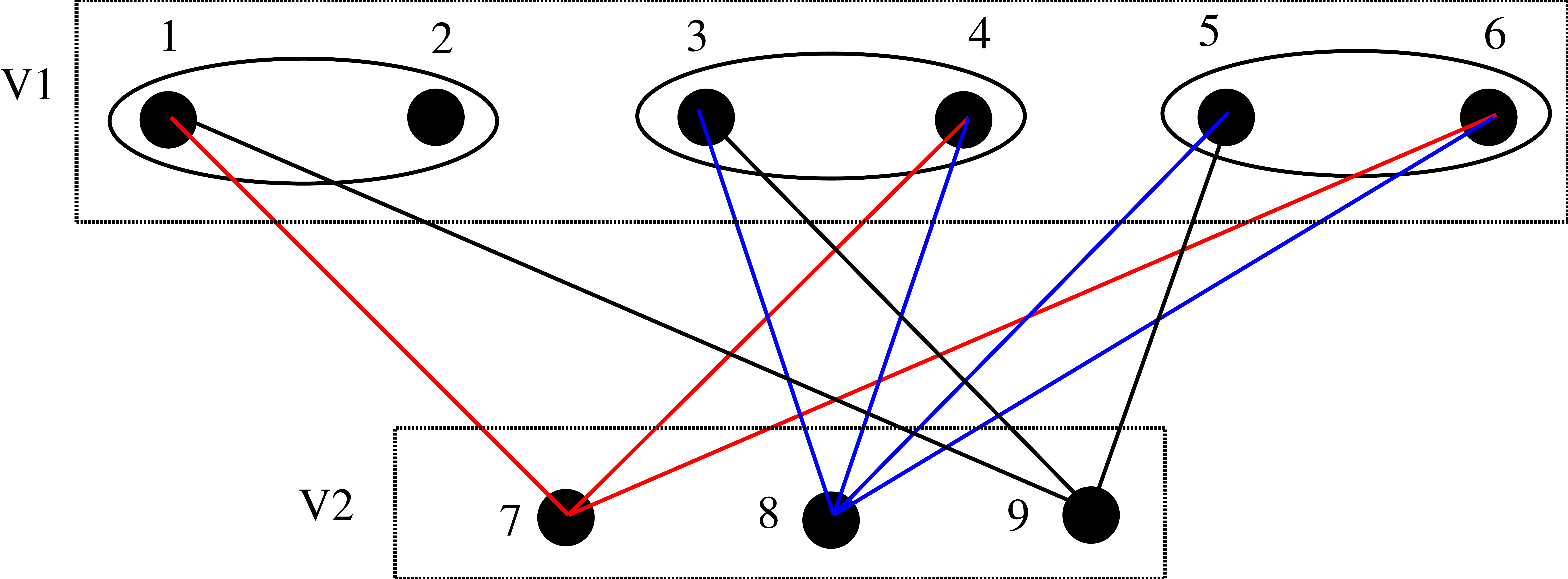}
\end{minipage}
}
\subfloat[$G^{'}$]{
\begin{minipage}[t]{0.4\textwidth}
   \centering
   \includegraphics[width=6cm,height=2.2cm]{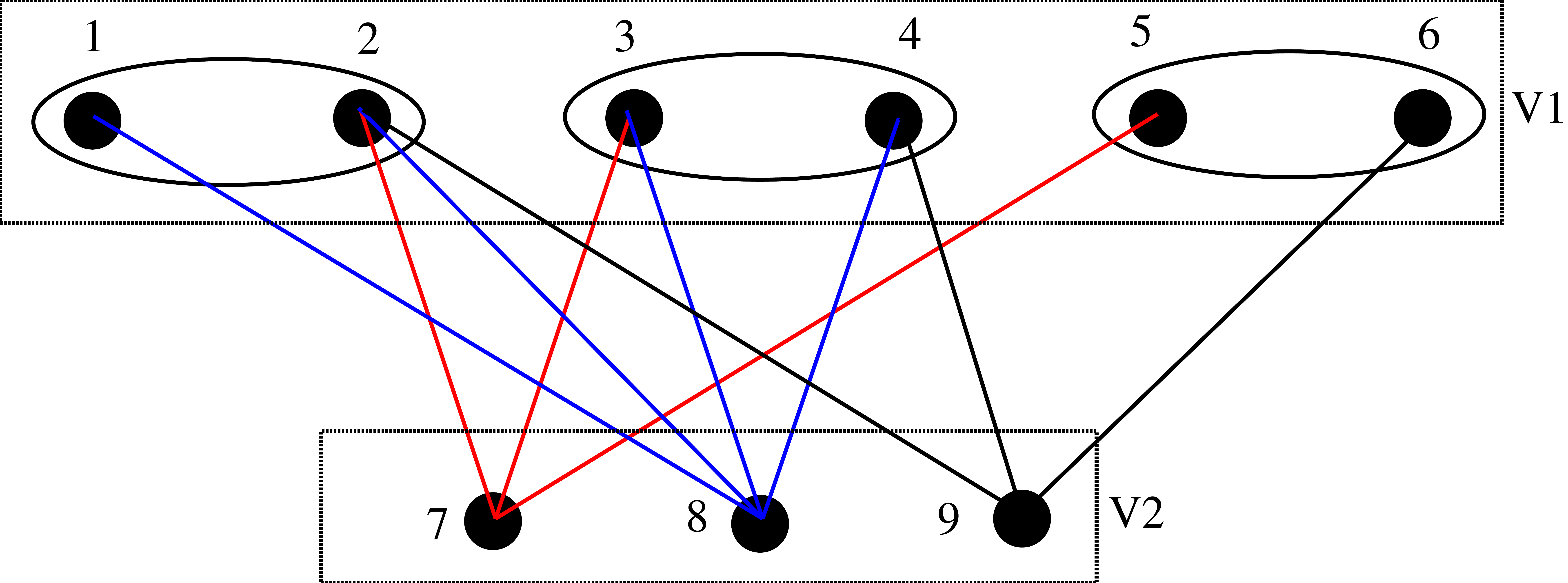}
\end{minipage}
}
\caption{Edge-switching for graph $G$}
\label{fig:ps}
\end{figure}

\noindent \textbf{4.2. $Q_k$ has the form of case (ii) in Theorem 2.1}

 Set $k=3m$. Let $G$ be the graph with adjacency matrix $A(G)$ as given in Eq.~\eqref{Y} and $X$ be the induced subgraph of $G$ with $V(X)=\{1,2,\ldots,3m-1,3m\}$.
 We have the following theorem.
\begin{them}\label{HH}
Let $V(X)$ be partitioned into $m$ groups $\{1,2,3\}$,$\{4,5,6\}$,$\ldots$,$\{3m-2,3m-1,3m\}$. Then $Q_k^TC$ is a (0,1)-matrix if and only if for any vertex $v\in V(G)\setminus V(X)$, $v$ is adjacent to three vertices in the same group for some groups.
\end{them}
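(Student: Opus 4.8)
The plan is to reduce the statement to a column-by-column analysis of $Q_k^TC$ and then to a purely local computation on each diagonal block. Since $Q_k^TC$ is a $(0,1)$-matrix exactly when every one of its columns is a $(0,1)$-vector, fix a vertex $v\in V(G)\setminus V(X)$ and let $c\in\{0,1\}^{3m}$ be the corresponding column of $C$, partitioned into blocks $c=(c_1^T,\ldots,c_m^T)^T$ with $c_i\in\{0,1\}^3$ recording the adjacencies of $v$ to the $i$-th group $\{3i-2,3i-1,3i\}$. Reading off the block structure of $Q_k$ in Theorem~\ref{Q1} case (ii), the transpose $Q_k^T$ carries $Y_2$ on the diagonal, $J_2$ on the first superdiagonal and $J_2$ in the bottom-left corner, so that $(Q_k^Tc)_i=\tfrac13\bigl(Y_2c_i+J_2c_{i+1}\bigr)$ for $i=1,\ldots,m$, with the cyclic convention $c_{m+1}=c_1$.

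First I would simplify this expression. Writing $\sigma_i=\mathbf 1^Tc_i$ for the number of neighbours of $v$ in the $i$-th group, we have $J_2c_i=\sigma_i\mathbf 1$ and, since $Y_2=J_2-3I_2$, each block collapses to
\[
(Q_k^Tc)_i=\frac{\sigma_i+\sigma_{i+1}}{3}\,\mathbf 1-c_i ,\qquad i=1,\dots,m,\ c_{m+1}=c_1 .
\]
Thus the $j$-th entry of this block equals $\tau_i-(c_i)_j$ with $\tau_i:=(\sigma_i+\sigma_{i+1})/3$, and $Q_k^Tc$ lies in $\{0,1\}^{3m}$ precisely when $\tau_i-(c_i)_j\in\{0,1\}$ for every $i$ and every coordinate $j$. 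This is settled block by block: if $c_i=\mathbf 0$ the condition forces $\tau_i\in\{0,1\}$; if $c_i=\mathbf 1$ it forces $\tau_i\in\{1,2\}$; and if $c_i$ is \emph{mixed} (has both a $0$ and a $1$ entry) the two requirements intersect in $\tau_i=1$, i.e.\ $\sigma_i+\sigma_{i+1}=3$.

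Next I would turn these local constraints into a global dichotomy. In the first two cases $\sigma_i\in\{0,3\}$ and the constraint on $\tau_i$ forces $\sigma_{i+1}\in\{0,3\}$ as well; in the mixed case $\sigma_i\in\{1,2\}$ forces $\sigma_{i+1}=3-\sigma_i\in\{1,2\}$. Hence ``pure'' (i.e.\ $c_i\in\{\mathbf 0,\mathbf 1\}$) propagates to pure and ``mixed'' propagates to mixed around the cycle, and by the cyclic coupling every block has the same type. If all blocks are pure, then $v$ is adjacent to all three or to none of the vertices in each group, which is exactly the stated condition; here $\tau_i\in\{0,1,2\}$ gives $\tau_i-(c_i)_j\in\{0,1\}$ for all $i,j$, so $Q_k^Tc$ is a $(0,1)$-vector and the ``if'' direction is immediate. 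For the converse it remains to dispose of the all-mixed branch: there $\sigma_i+\sigma_{i+1}=3$ for all $i$ yields $\sigma_{i+2}=\sigma_i$, so the $\sigma_i$ alternate between a value and its complement $3-\sigma_i\neq\sigma_i$, and closing the cycle is possible only when $m$ is even; for the odd orders underlying our construction this branch is excluded and the pure condition is the only survivor.

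The step I expect to be the main obstacle is precisely this passage from the local, block-wise condition to the global one: verifying the pure/mixed propagation and, in particular, eliminating the all-mixed alternative through the parity of $m$ along the cyclic boundary. This is where the level-three situation genuinely differs from the level-two case, in which both the ``exactly one per group'' and the ``all-or-nothing'' patterns persist; the extra multiplicative factor $3$ here collapses the mixed possibility to the single equation $\sigma_i+\sigma_{i+1}=3$, whose cyclic solvability is governed by the parity of $m$.
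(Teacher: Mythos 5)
Your reduction and block computation are correct and follow the same outline as the paper's proof: pass to a single column $c$ of $C$ and write the condition blockwise as $\frac13(Y_2c_i+J_2c_{i+1})\in\{0,1\}^3$, which is exactly the system of equations (3)--(5). Where you diverge is at the decisive step, and there your version is right and the paper's is wrong. The paper asserts that the three local equations $-2x+y+z+u+v+w\in\{0,3\}$, $x-2y+z+u+v+w\in\{0,3\}$, $x+y-2z+u+v+w\in\{0,3\}$ have exactly four $0/1$ solutions; in fact every $0/1$ vector with $x+y+z+u+v+w=3$ is a solution (for instance $(1,0,0,1,1,0)$), so there are $22$ of them, and the ``mixed'' branch you isolate, namely $\sigma_i\in\{1,2\}$ with $\sigma_i+\sigma_{i+1}=3$, genuinely survives the local analysis. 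Your pure/mixed propagation around the cycle, together with the observation that the all-mixed branch forces $\sigma_{i+2}=\sigma_i$ and hence $2\sigma=3$ when $m$ is odd, is the correct way to close the necessity argument.

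The one thing you must not leave implicit is the parity hypothesis. The theorem as printed carries no restriction on $m$, and for even $m$ it is false: already for $m=2$ the column $c=(1,0,0,1,1,0)^T$ satisfies $Q_6^Tc=(0,1,1,0,0,1)^T\in\{0,1\}^6$, although $v$ meets the first group in one vertex and the second group in two. For general even $m$ one alternates $c_i$ between a weight-one and a weight-two vector to get the same failure. So either add ``$m$ odd'' to the statement (your proof then closes completely), or weaken the conclusion to ``each $c_i$ is $\mathbf 0$ or $\mathbf 1$, or else every $c_i$ is mixed and $\sigma_i+\sigma_{i+1}=3$ for all $i$.'' As written, ``the odd orders underlying our construction'' is not a hypothesis anywhere in Section~4.2 (Example 6 allows any $m\ge 5$), so you should state it explicitly rather than appeal to it.
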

\begin{proof}
Let $\alpha=(x_1,x_2,\ldots,x_{3m-1},{x_{3m}})^T$ be any column of matrix $C$. Then $Q_k^TC$ is a (0,1)-matrix if and only if $Q_k^T\alpha$ is a (0,1)-vector, i.e.,
 \begin{eqnarray}
-2x_{3i+1}+x_{3i+2}+x_{3i+3}+x_{3i+4}+x_{3i+5}+x_{3i+6}=0 ~{\rm or}~ 3, \\
x_{3i+1}-2x_{3i+2}+x_{3i+3}+x_{3i+4}+x_{3i+5}+x_{3i+6}=0 ~{\rm or}~ 3, \\
x_{3i+1}+x_{3i+2}-2x_{3i+3}+x_{3i+4}+x_{3i+5}+x_{3i+6}=0 ~{\rm or}~ 3,
\end{eqnarray}
for $i=0,1,2,\ldots,m-1$ ($x_{3m+4}=x_1,x_{3m+5}=x_2,x_{3m+6}=x_3$).

If $v$ is adjacent to three vertices in the same group for some groups, then $\forall i$, $(x_{3i+1},x_{3i+2},x_{3i+3},x_{3i+4},x_{3i+5},x_{3i+6})=(0,0,0,0,0,0)$ or $(1,1,1,0,0,0)$, or $(0,0,0,1,1,1)$ or $(1,1,1,1,1,1)$. Still, it is easy to see Eqs.~(3) , (4) and (5) hold.

To prove the necessity part, consider three equations $-2x+y+z+u+v+w=0$ or $3$  , $x+y-2z+u+v+w=0$ or $3$ and $x-2y+z+u+v+w=0$ or $3$ in variables $x,y,z,w\in{\{0,1\}}$. There are exactly four solutions to these three equations, i.e.,$(x,y,z,u,v,w)$ equals one of $(0,0,0,0,0,0)$ or $(1,1,1,0,0,0)$, or $(0,0,0,1,1,1)$ or $(1,1,1,1,1,1)$. Thus, for any $v\in V(G)\setminus V(X)$, $v$ is adjacent to three vertices in the same group for some groups. This completes the proof.
\end{proof}

Next, we summarize the details of the construction as follows. Suppose that the vertex set $V(G)$ can be partitioned into two parts $V=V_1\cup V_2$. Let $X=G[V_1]$ be the subgraph induced by $V_1$.
Suppose the following conditions hold:
\begin{itemize}
\item  $X$ is a graph on $3m$ vertices such that $Q_{k}^TA(X)Q_{k}$ is a (0,1)-matrix, e.g., $X$ can be selected as the graph in Example 6.
\item $\forall v\in {V_2}$, $v$ is adjacent to three vertices in the same group for some groups.
\end{itemize}

 Let $G^'$ be the graph obtained from $G$ as follows: the adjacency matrices induced by $V_1$ and $V_2$ in $G^'$ are $A(G^'[V_1])=Q_{k}^TA(X)Q_{k}$
 and $G^'[V_2]=G[V_2]$, respectively. The rules for edge-switching between $V_1$ and $V_2$ can be described as follows:

\begin{itemize}

\item $\forall v\in {V_2}$, if $v$ is adjacent to three vertices in the same group for some groups, then shift all the edges between $\{v\}$ and $V_1$ in the following way:
if $v$ is adjacent to three vertices $v_{3i+1}$, $v_{3i+2}$ and $v_{3i+3}$ in $G$, then $G$ is adjacent to both vertices  $v_{3i-2}$, $v_{3i-1}$ and $v_{3i}$ in the new graph $G^'$, for $i=0,1,\ldots,m-1$ ($v_{3m+1}=v_1, v_{3m+2}=v_2, v_{3m+3}=v_3$).

\end{itemize}
Then $G$ and $G^{'}$ are cospectral with cospectral complements.\\

\noindent
\begin{exam}
\rm{Let $X$ be the graph given as in Example 4. Then $Q_{15}^T A(X)Q_{15}$ is a (0,1)-matrix. Let $Q={\rm diag}(Q_{15},I_3)$. Then $A^{'}=Q^TAQ$ is the adjacency matrix of the graph $G^{'}$. By computers $G$ and $G^{'}$ are non-isomorphic. Here we just give the edges corresponding to matrix $C$(see Fig.~5).}
\end{exam}

\begin{eqnarray*}
	\tiny{
		~~~~~~~~~~A=\left[\begin{array}{ccccccccccccccc|ccc}
 0& 0& 0& 1& 0& 0& 0& 0& 0& 0& 1& 1& 1& 0& 0& 1& 0& 1\\
 0& 0& 0& 1& 0& 0& 1& 1& 1& 0& 1& 1& 0& 1& 0& 1& 0& 1\\
 0& 0& 0& 1& 0& 0& 1& 1& 1& 0& 1& 1& 0& 0& 1& 1& 0& 1\\
 1& 1& 1& 0& 0& 0& 1& 0& 0& 0& 0& 0& 0& 0& 0& 0& 1& 0\\
 0& 0& 0& 0& 0& 0& 1& 0& 0& 1& 1& 1& 0& 0& 0& 0& 1& 0\\
 0& 0& 0& 0& 0& 0& 1& 0& 0& 1& 1& 1& 0& 0& 0& 0& 1& 0\\
 0& 1& 1& 1& 1& 1& 0& 0& 0& 1& 0& 0& 0& 0& 0& 1& 0& 0\\
 0& 1& 1& 0& 0& 0& 0& 0& 0& 1& 0& 0& 1& 1& 1& 1& 0& 0\\
 0& 1& 1& 0& 0& 0& 0& 0& 0& 1& 0& 0& 1& 1& 1& 1& 0& 0\\
 0& 0& 0& 0& 1& 1& 1& 1& 1& 0& 0& 0& 1& 0& 0& 0& 1& 0\\
 1& 1& 1& 0& 1& 1& 0& 0& 0& 0& 0& 0& 1& 0& 0& 0& 1& 0\\
 1& 1& 1& 0& 1& 1& 0& 0& 0& 0& 0& 0& 1& 0& 0& 0& 1& 0\\
 1& 0& 0& 0& 0& 0& 0& 1& 1& 1& 1& 1& 0& 0& 0& 0& 1& 0\\
 0& 1& 0& 0& 0& 0& 0& 1& 1& 0& 0& 0& 0& 0& 0& 0& 1& 0\\
 0& 0& 1& 0& 0& 0& 0& 1& 1& 0& 0& 0& 0& 0& 0& 0& 1& 0\\\hline
 1& 1& 1& 0& 0& 0& 1& 1& 1& 0& 0& 0& 0& 0& 0& 0& 1& 0\\
 0& 0& 0& 1& 1& 1& 0& 0& 0& 1& 1& 1& 1& 1& 1& 1& 0& 1\\
 1& 1& 1& 0& 0& 0& 0& 0& 0& 0& 0& 0& 0& 0& 0& 0& 1& 0
		\end{array}\right]},
\end{eqnarray*}

\begin{eqnarray*}
	\tiny{
Q^TAQ=\left[\begin{array}{ccccccccccccccc|ccc}
 0& 0& 0& 1& 1& 1& 0& 1& 1& 0& 0& 0& 1& 0& 0& 0& 1& 0\\
 0& 0& 0& 0& 0& 0& 0& 1& 1& 0& 0& 0& 0& 1& 0& 0& 1& 0\\
 0& 0& 0& 0& 0& 0& 0& 1& 1& 0& 0& 0& 0& 0& 1& 0& 1& 0\\
 1& 0& 0& 0& 0& 0& 1& 1& 1& 0& 1& 1& 0& 0& 0& 1& 0& 0\\
 1& 0& 0& 0& 0& 0& 0& 0& 0& 0& 1& 1& 1& 1& 1& 1& 0& 0\\
 1& 0& 0& 0& 0& 0& 0& 0& 0& 0& 1& 1& 1& 1& 1& 1& 0& 0\\
 0& 0& 0& 1& 0& 0& 0& 0& 0& 1& 1& 1& 0& 1& 1& 0& 1& 0\\
 1& 1& 1& 1& 0& 0& 0& 0& 0& 0& 0& 0& 0& 1& 1& 0& 1& 0\\
 1& 1& 1& 1& 0& 0& 0& 0& 0& 0& 0& 0& 0& 1& 1& 0& 1& 0\\
 0& 0& 0& 0& 0& 0& 1& 0& 0& 0& 0& 0& 1& 1& 1& 0& 1& 0\\
 0& 0& 0& 1& 1& 1& 1& 0& 0& 0& 0& 0& 0& 0& 0& 0& 1& 0\\
 0& 0& 0& 1& 1& 1& 1& 0& 0& 0& 0& 0& 0& 0& 0& 0& 1& 0\\
 1& 0& 0& 0& 1& 1& 0& 0& 0& 1& 0& 0& 0& 0& 0& 1& 0& 1\\
 0& 1& 0& 0& 1& 1& 1& 1& 1& 1& 0& 0& 0& 0& 0& 1& 0& 1\\
 0& 0& 1& 0& 1& 1& 1& 1& 1& 1& 0& 0& 0& 0& 0& 1& 0& 1\\\hline
 0& 0& 0& 1& 1& 1& 0& 0& 0& 0& 0& 0& 1& 1& 1& 0& 1& 0\\
 1& 1& 1& 0& 0& 0& 1& 1& 1& 1& 1& 1& 0& 0& 0& 1& 0& 1\\
 0& 0& 0& 0& 0& 0& 0& 0& 0& 0& 0& 0& 1& 1& 1& 0& 1& 0
		\end{array}\right]}.
\end{eqnarray*}

\begin{figure}[t]
	\centering   
	\subfloat[$G$]{
		\begin{minipage}[t]{0.4\textwidth}
			\centering
			 \includegraphics[width=6.2cm,height=2.7cm]{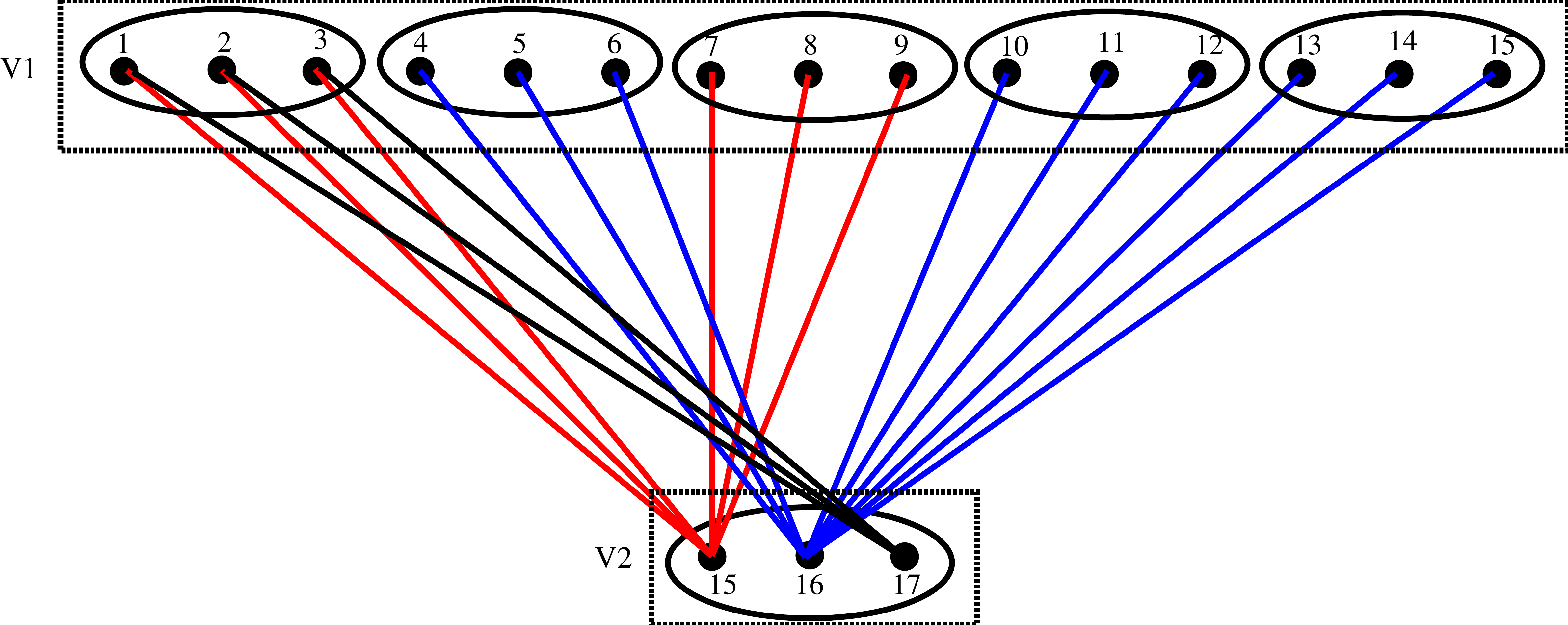}
		\end{minipage}
	}
	\subfloat[$G^{'}$]{
		\begin{minipage}[t]{0.4\textwidth}
			\centering
			\includegraphics[width=6.2cm,height=2.7cm]{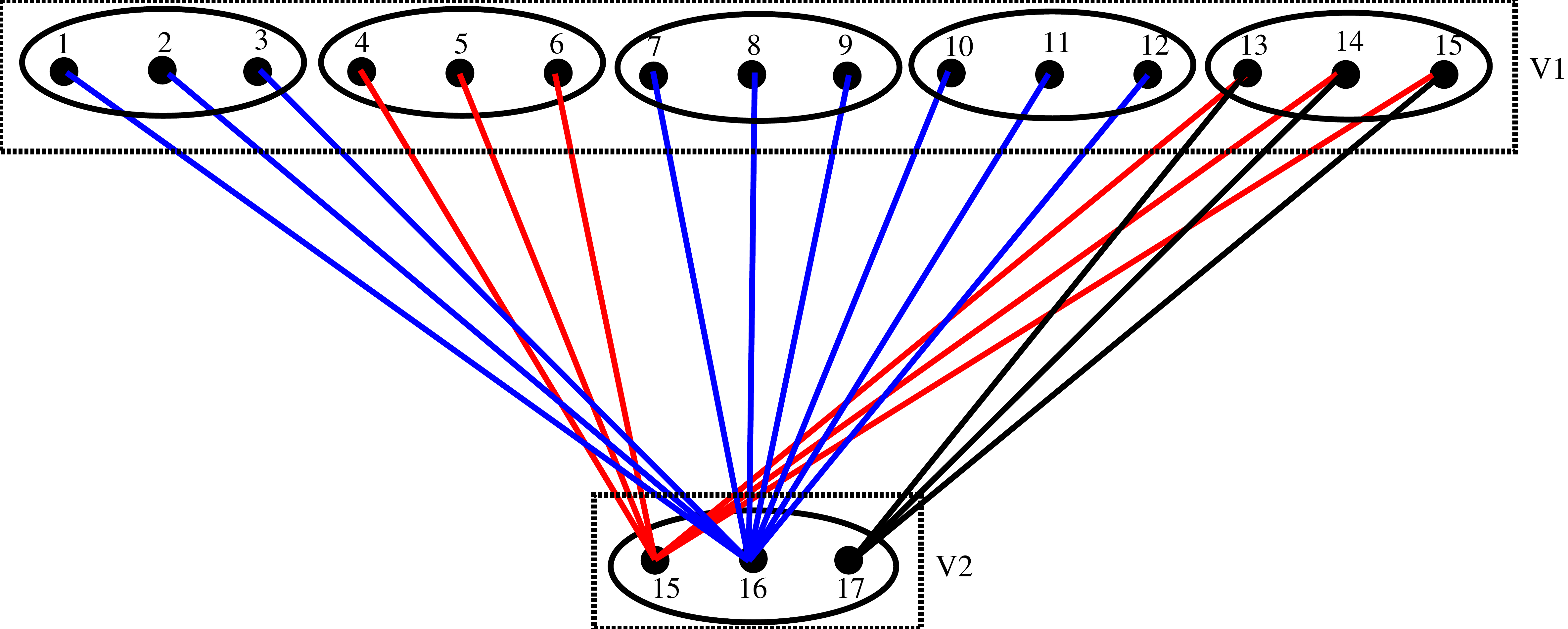}
		\end{minipage}
	}
	\caption{Edge-switching for graph $G$}
	\label{fig:ps}
\end{figure}

\section{Concluding remarks}

In this paper, we are mainly concerned with the problem of constructing cospectral graphs
through rational orthogonal matrix $Q$ of level two and three. We provide two algorithms and construct several infinite families of graphs $G$ with adjacency matrix $A$ such that $Q_k^TAQ_k$ is a (0,1)-matrix. Finally we give a new method of
constructing cospectral graphs by using regular rational orthogonal matrices with
level 2 and level 3. Our result gives some partial answers to Question 1, Question 2 and Question 3 in paper \cite{LHM-WW}. As a future work, it still would be interesting to investigate these problems.

\section{Appendix}

\begin{equation*}
\begin{aligned}
&\tiny{B_1=\left[\begin{array}{ccc}
   0& 0& 0\\
   0& 0& 0\\
   0& 0& 0
\end{array}\right]},\tiny{B_2=\left[\begin{array}{ccc}
0& 0& 1\\
   0& 0& 1\\
   0& 0& 1
\end{array}\right]},\tiny{B_3=\left[\begin{array}{ccc}
0& 0& 1\\
   0& 0& 1\\
   0& 1& 0
\end{array}\right]},\tiny{B_4=\left[\begin{array}{ccc}
   0& 0& 1\\
   0& 0& 1\\
   1& 0& 0
\end{array}\right]},\tiny{B_5=\left[\begin{array}{ccc}
   0& 0& 1\\
   0& 1& 0\\
   0& 0& 1
\end{array}\right],}\\
&\tiny{B_6=\left[\begin{array}{ccc}
 0& 0& 1\\
   0& 1& 0\\
   0& 1& 0
\end{array}\right]},\tiny{B_7=\left[\begin{array}{ccc}
 0& 0& 1\\
   0& 1& 0\\
   1& 0& 0
\end{array}\right]},\tiny{B_8=\left[\begin{array}{ccc}
   0& 0& 1\\
   1& 0& 0\\
   0& 0& 1
\end{array}\right]},\tiny{B_9=\left[\begin{array}{ccc}
  0& 0& 1\\
   1& 0& 0\\
   0& 1& 0
\end{array}\right]},\tiny{B_{10}=\left[\begin{array}{ccc}
0& 0& 1\\
   1& 0& 0\\
   1& 0& 0
\end{array}\right],}\\
&\tiny{B_{11}=\left[\begin{array}{ccc}
  0& 1& 0\\
   0& 0& 1\\
   0& 0& 1
\end{array}\right]},\tiny{B_{12}=\left[\begin{array}{ccc}
   0& 1& 0\\
   0& 0& 1\\
   0& 1& 0
\end{array}\right]},\tiny{B_{13}=\left[\begin{array}{ccc}
   0& 1& 0\\
   0& 0& 1\\
   1& 0& 0
\end{array}\right]},\tiny{B_{14}=\left[\begin{array}{ccc}
 0& 1& 0\\
   0& 1& 0\\
   0& 0& 1
\end{array}\right]},\tiny{B_{15}=\left[\begin{array}{ccc}
0& 1& 0\\
   0& 1& 0\\
   0& 1& 0
\end{array}\right],}\\
&\tiny{B_{16}=\left[\begin{array}{ccc}
  0& 1& 0\\
   0& 1& 0\\
   1& 0& 0
\end{array}\right]},\tiny{B_{17}=\left[\begin{array}{ccc}
   0& 1& 0\\
   1& 0& 0\\
   0& 0& 1
\end{array}\right]},\tiny{B_{18}=\left[\begin{array}{ccc}
   0& 1& 0\\
   1& 0& 0\\
   0& 1& 0
\end{array}\right]},\tiny{B_{19}=\left[\begin{array}{ccc}
 0& 1& 0\\
   1& 0& 0\\
   1& 0& 0
\end{array}\right]},\tiny{B_{20}=\left[\begin{array}{ccc}
   0& 1& 1\\
   0& 1& 1\\
   0& 1& 1
\end{array}\right],}\\
&\tiny{B_{21}=\left[\begin{array}{ccc}
   0& 1& 1\\
   0& 1& 1\\
   1& 0& 1
\end{array}\right]},\tiny{B_{22}=\left[\begin{array}{ccc}
   0& 1& 1\\
   0& 1& 1\\
   1& 1& 0
\end{array}\right]},\tiny{B_{23}=\left[\begin{array}{ccc}
   0& 1& 1\\
   1& 0& 1\\
   0& 1& 1
\end{array}\right]},\tiny{B_{24}=\left[\begin{array}{ccc}
 0& 1& 1\\
   1& 0& 1\\
   1& 0& 1
\end{array}\right]},\tiny{B_{25}=\left[\begin{array}{ccc}
  0& 1& 1\\
   1& 0& 1\\
   1& 1& 0
\end{array}\right],}\\
&\tiny{B_{26}=\left[\begin{array}{ccc}
0& 1& 1\\
   1& 1& 0\\
   0& 1& 1
\end{array}\right]},\tiny{B_{27}=\left[\begin{array}{ccc}
 0& 1& 1\\
   1& 1& 0\\
   1& 0& 1
\end{array}\right]},\tiny{B_{28}=\left[\begin{array}{ccc}
0& 1& 1\\
   1& 1& 0\\
   1& 1& 0
\end{array}\right]},\tiny{B_{29}=\left[\begin{array}{ccc}
    1& 0& 0\\
   0& 0& 1\\
   0& 0& 1
\end{array}\right]},\tiny{B_{30}=\left[\begin{array}{ccc}
   1& 0& 0\\
   0& 0& 1\\
   0& 1& 0
\end{array}\right],}\\
&\tiny{B_{31}=\left[\begin{array}{ccc}
1& 0& 0\\
   0& 0& 1\\
   1& 0& 0
\end{array}\right]},\tiny{B_{32}=\left[\begin{array}{ccc}
   1& 0& 0\\
   0& 1& 0\\
   0& 0& 1
\end{array}\right]},\tiny{B_{33}=\left[\begin{array}{ccc}
   1& 0& 0\\
   0& 1& 0\\
   0& 1& 0
\end{array}\right]},\tiny{B_{34}=\left[\begin{array}{ccc}
1& 0& 0\\
   0& 1& 0\\
   1& 0& 0
\end{array}\right]},\tiny{B_{35}=\left[\begin{array}{ccc}
   1& 0& 0\\
   1& 0& 0\\
   0& 0& 1
\end{array}\right],}\\
&\tiny{B_{36}=\left[\begin{array}{ccc}
   1& 0& 0\\
   1& 0& 0\\
   0& 1& 0
\end{array}\right]},\tiny{B_{37}=\left[\begin{array}{ccc}
 1& 0& 0\\
   1& 0& 0\\
   1& 0& 0
\end{array}\right]},\tiny{B_{38}=\left[\begin{array}{ccc}
  1& 0& 1\\
   0& 1& 1\\
   0& 1& 1
\end{array}\right]},\tiny{B_{39}=\left[\begin{array}{ccc}
1& 0& 1\\
   0& 1& 1\\
   1& 0& 1
\end{array}\right]},\tiny{B_{40}=\left[\begin{array}{ccc}
 1& 0& 1\\
   0& 1& 1\\
   1& 1& 0
\end{array}\right],}\\
&\tiny{B_{41}=\left[\begin{array}{ccc}
1& 0& 1\\
   1& 0& 1\\
   0& 1& 1
\end{array}\right]},\tiny{B_{42}=\left[\begin{array}{ccc}
1& 0& 1\\
   1& 0& 1\\
   1& 0& 1
\end{array}\right]},\tiny{B_{43}=\left[\begin{array}{ccc}
1& 0& 1\\
   1& 0& 1\\
   1& 1& 0
\end{array}\right]},\tiny{B_{44}=\left[\begin{array}{ccc}
 1& 0& 1\\
   1& 1& 0\\
   0& 1& 1
\end{array}\right]},\tiny{B_{45}=\left[\begin{array}{ccc}
 1& 0& 1\\
   1& 1& 0\\
   1& 0& 1
\end{array}\right],}\\
&\tiny{B_{46}=\left[\begin{array}{ccc}
 1& 0& 1\\
   1& 1& 0\\
   1& 1& 0
\end{array}\right]},\tiny{B_{47}=\left[\begin{array}{ccc}
 1& 1& 0\\
   0& 1& 1\\
   0& 1& 1
\end{array}\right]},\tiny{B_{48}=\left[\begin{array}{ccc}
   1& 1& 0\\
   0& 1& 1\\
   1& 0& 1
\end{array}\right]},\tiny{B_{49}=\left[\begin{array}{ccc}
1& 1& 0\\
   0& 1& 1\\
   1& 1& 0
\end{array}\right]},\tiny{B_{50}=\left[\begin{array}{ccc}
 1& 1& 0\\
   1& 0& 1\\
   0& 1& 1
\end{array}\right],}\\
&\tiny{B_{51}=\left[\begin{array}{ccc}
  1& 1& 0\\
   1& 0& 1\\
   1& 0& 1
\end{array}\right]},\tiny{B_{52}=\left[\begin{array}{ccc}
   1& 1& 0\\
   1& 0& 1\\
   1& 1& 0
\end{array}\right]},\tiny{B_{53}=\left[\begin{array}{ccc}
  1& 1& 0\\
   1& 1& 0\\
   0& 1& 1
\end{array}\right]},\tiny{B_{54}=\left[\begin{array}{ccc}
  1& 1& 0\\
   1& 1& 0\\
   1& 0& 1
\end{array}\right]},\tiny{B_{55}=\left[\begin{array}{ccc}
   1& 1& 0\\
   1& 1& 0\\
   1& 1& 0
\end{array}\right],}\\
&\tiny{B_{56}=\left[\begin{array}{ccc}
 1& 1& 1\\
   1& 1& 1\\
   1& 1& 1
\end{array}\right]}.
\end{aligned}
\end{equation*}

\begin{thebibliography}{99}
\bibitem{AA-WHH} A. Abiad, W.H. Haemers, Cospectral Graphs and Regular Orthogonal Matrices of
Level 2, Electron J. Comb., 2012(19): $\#$ P13.
\bibitem{BR} R. A. Brualdi, H.J. Ryser, Combinatorial Matrix
Theory, Cambridge University Press, 1991.
\bibitem{CDS} D.M. Cvetkovi\'{c}, M. Doob, H. Sachs, Spectra of Graphs, Academic Press, NewYork, 1982.
\bibitem{CDG-BDM}  C.D. Godsil, B.D. McKay, Constructing cospectral graphs, Aequationes Math., 1982(25): 257-268.
\bibitem{WHH} W.H. Haemers, X. Liu, Y. Zhang, Spectral Characterizations of lollipop graphs, Linear
Algebra Appl., 2008 (428):2415-2423.
\bibitem{YZJ-SCG-WW}Y.Z. Ji, S.C. Gong, W. Wang, Constructing cospectral bipartite graphs, Discrete Math., 2020(343): 112020.
\bibitem{LHM-WW}L.H. Mao, W. Wang, F.J. Liu, L.H. Qiu, Constructing cospectral graphs via regular rational
orthogonal matrices with level two, Discrete Math., 2023(346): 113156.
\bibitem{GRO}G.R. Omidi, B. K. Tajbakhsk, Starlike like trees are determined by their Laplacian spectrum,
Linear Algebra Appl., 2007(422): 654-658.
\bibitem{LHQ-YZJ-WW} L.H. Qiu, Y.Z. Ji, W. Wang, On a theorem of Godsil and McKay on the construction of
cospectral graphs, Linear Algebra Appl., 2020(603): 265-274.
\bibitem{AJS} A. J. Schwenk, Almost all trees are cospectral, in: F. Harary (Ed.), New Directions
in the Theory of Graphs, Academic Press, NewYork, 1973: 275-307.
\bibitem{SUN} T. Sunada, Riemannian coverings and isospectral manifolds, Ann. of Math., 1985(121): 169-186.
\bibitem{JHV-JJS} J. H. van Lint, J. J. Seidel, Equilateral point sets in elliptic geometry, Proc. Nederl. Akad. Wetenschappen A, 1966(69): 335-348.
\bibitem{WX1} W. Wang, C.X. Xu, An excluding algorithm for testing whether
a family of graphs are determined by their generalized spectra,
Linear Algebra Appl., 2006(418): 62-74.
\bibitem{WW} W. Wang, Generalized spectral characterization revisited, Electron J. Comb., 2013(20): $\#$ P4.
\bibitem{WW-LHQ-YLH} W. Wang, L.H. Qiu, Y.L. Hu, Cospectral graphs, GM-switching and regular
rational orthogonal matrices of level $p$, Linear Algebra Appl., 2019(563): 154-177.
\bibitem{JFW} J.F. Wang, Q.X. Huang, F. Belardo, E.M.L. Marzi, On the spectral characterization of $\infty$-graphs, Discrete Math., 2010(310): 1845-1855.

\end{thebibliography}
\end{document}